\newcommand{\RightAngle}[4][5pt]{%
	\draw ($#3!#1!#2$)
	--($ #3!2!($($#3!#1!#2$)!.5!($#3!#1!#4$)$) $)
	--($#3!#1!#4$) ;
}
\newcommand{\R}{{\mathbb R}}
\newtheorem{opr}{Definition}
\newtheorem{thm}{Theorem}
\newtheorem{gypo}[thm]{Conjecture}
\newtheorem{lem}[thm]{Lemma}
\newtheorem{prop}[thm]{Proposition}
\newtheorem{prb}[thm]{Problem}
\newtheorem{obs}[thm]{Observation}
\DeclareMathOperator{\inter}{int}
\date{}
\title{Proof of Schur's conjecture in $\R^d$ }
\begin{document}
\author{Andrey B. Kupavskii\footnote{Ecole Polytechnique F\'ed\'erale de Lausanne, Moscow Institute of Physics and Technology; Email: {\tt kupavskii@yandex.ru} \ \ Research supported in part by the Swiss National Science Foundation Grants 200021-137574 and 200020-14453 and by the grant N 15-01-03530 of the Russian Foundation for Basic Research.}, Alexandr Polyanskii\footnote{Moscow Institute of Physics and Technology, Mathematics Department, Technion — Israel Institute of Technology; Email: {\tt alexander.polyanskii@yandex.ru}\ \ Research suppoted in part by the Presedent Grant MK-3138.2014.1.}}

\maketitle
\begin{abstract} In this paper we prove Schur's conjecture in $\R^d$, which states that any diameter graph $G$ in the Euclidean space $\R^d$ on $n$ vertices may have at most $n$ cliques of size $d$. We obtain an analogous statement for diameter graphs with unit edge length on a sphere $S^d_r$ of radius $r>1/\sqrt 2$. The proof rests on the following statement, conjectured by F. Mori\'c and J. Pach: given two unit regular simplices $\Delta_1,\Delta_2$ on $d$ vertices in $\R^d$, either they share $d-2$ vertices, or there are vertices $v_1\in \Delta_1,v_2\in \Delta_2$ such that $\|v_1-v_2\|>1$. The same holds for unit simplices on a $d$-dimensional sphere of radius greater than $1/\sqrt 2$.
\end{abstract}

\noindent\textbf{MSC:} 52C10.
\vskip+0.2cm
\noindent\textbf{Keywords:} Schur's conjecture, diameter graph, unit simplices in $\R^d$.\\

\section{Introduction}
One of the classical problems in discrete geometry, raised by P. Erd\H os in 1946 \cite{Erd1}, is the following: given $n$ points in the plane, how many unit distances they may determine?
  The key definition related to the question of P. Erd\H os is that of a {\it unit distance graph}. A graph $G$ is a {\it unit distance graph} in $\R^d$ if its set of vertices is a finite subset of $\R^d$ and the edges are formed by the pairs of vertices which are at unit distance apart. In terms of distance graphs the question is to determine the maximal number of edges in a planar unit distance graph on $n$ vertices. In this paper we focus on the questions of this type for {\it diameter graphs}.
 A graph $G=(V,E)$ is a {\it diameter graph} in $\R^d$, if $V\subset \R^d$ is a finite set of diameter 1, and edges of $G$ are formed by the pairs of vertices that are at unit distance apart.

Diameter graphs arise naturally in the context of the finite version of the famous Borsuk's problem (see, e.g., \cite{BMP, Rai3} for the survey on Borsuk's problem), which is stated as follows: is it true that any (finite) set of unit diameter in $\R^d$ can be partitioned into $d+1$ subsets of strictly smaller diameter? The finite version is equivalent to the following question concerning diameter graphs: is it true that any diameter graph $G$ in $\R^d$ satisfies $\chi(G)\le d+1$?

A question about diameter graphs analogous to the question from the first paragraph has a simple answer: any set of $n$ points in the plane generates at most $n$ diameters, or any diameter graph on $n$ vertices in the plane has at most $n$ edges. This was proved by H. Hopf and E. Pannwitz in \cite{PH}. Interestingly, this result leads to a simple proof of the fact that Borsuk's question for finite sets in the plane have a positive answer. Indeed, it is easy to derive combinatorially that any graph $G$ on $n$ vertices with at most $n$ edges and such that any of its subgraphs has at least as many vertices as edges satisfies $\chi(G)\le 3$.
A. V\'azsonyi conjectured that any diameter graph in $\R^3$ on $n$ vertices has at most $2n-2$ edges. Again, it is easy to see that Borsuk's conjecture for finite sets in $\R^3$ follows from this statement. V\'azsonyi's conjecture was proved independently by B. Gr\"unbaum \cite{GR}, A. Heppes \cite{Hep2} and S. Straszewicz \cite{St}. An interesting generalization of this result to the case of $k$-th diameters was obtained by F. Mori\'c and J. Pach \cite{PP}.

While the maximum number of edges in a diameter graph in $\R^2,\R^3$ is linear in the number of vertices,  it becomes quadratic already in $\R^4$. To put the discussion in a more general context, we introduce the following notations.
Denote by $D_d(l,n)$ ($U_d(l,n)$) the maximum number of cliques of size $l$ in a diameter (unit distance) graph on $n$ vertices in $\R^d$. P. Erd\H os \cite{Erd1,Erd3} studied $U_d(2,n)$ and $D_d(2,n)$ for different $d$. He showed that for $d\ge 4$ we have $U_d(2,n), D_d(2,n) = \frac{\lfloor d/2\rfloor-1}{2\lfloor d/2\rfloor}n^2+\bar o(n^2).$ K. Swanepoel \cite{Swan} determined $U_d(2,n)$ for fixed even $d\ge 6$ and sufficiently large $n$ depending on $d$ and determined
 $D_d(2,n)$ for $d\ge 4$ and sufficiently large $n$.

 Functions $D_d(l,n),$ $U_d(l,n)$ for $l>2$ and similar functions were studied in several papers. In particular, the following conjecture was raised in \cite{Sch}:

 \begin{gypo}[Schur et. al., \cite{Sch}]\label{gypsp}
 We have $D_d(d,n) = n$ for $n\ge d+1$.
 \end{gypo}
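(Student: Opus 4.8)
I would start from the observation that a clique of size $d$ in a diameter graph $G\subset\R^d$ is exactly a unit regular $(d-1)$-simplex, and that all pairwise distances in $G$ are at most $1$. Hence, for any two distinct $d$-cliques $\Delta_1,\Delta_2$, the ``far pair'' alternative of the Mori\'c--Pach statement is excluded, and we conclude that $|\Delta_1\cap\Delta_2|\ge d-2$; equivalently, any two cliques differ by swapping at most two vertices. For the (easy) lower bound $D_d(d,n)\ge n$, the regular $d$-simplex on $d+1$ vertices already gives $d+1=n$ cliques, and one can append vertices one at a time, each completing a single new unit simplex on an existing $(d-2)$-dimensional face, to realise $n$ cliques on $n$ vertices for every $n\ge d+1$. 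So the whole content is the upper bound $D_d(d,n)\le n$.

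The plan for the upper bound is a joint induction on the dimension, proving the Euclidean statement together with its analogue on $S^k_r$ for $r>1/\sqrt 2$, with base case $k=2$ supplied by Hopf--Pannwitz and its spherical version. The engine of the inductive step is a reduction to spheres one dimension lower. If $v$ is a vertex lying in some clique, then $N(v)$ lies on the unit sphere $S_v$ (of dimension $d-1$ and radius $1>1/\sqrt2$), forms a spherical diameter graph there, and the $d$-cliques through $v$ correspond to the $(d-1)$-cliques of that spherical graph; by the spherical case in dimension $d-1$, at most $\deg v$ cliques pass through $v$. More importantly, peeling off a common $(d-2)$-core $K$ replaces the configuration by a diameter graph on the $2$-sphere $\Sigma$ of points at distance $1$ from all of $K$, whose radius is $\sqrt{(d-1)/(2(d-2))}$; since $\tfrac{d-1}{2(d-2)}>\tfrac12$, this radius exceeds $1/\sqrt2$, so the spherical base case applies and the links of $K$ (which are unit edges on $\Sigma$) are counted correctly.

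Because any two cliques have symmetric difference at most $4$, I would next classify $\mathcal F$ by the structure of cliques in the Johnson graph: along the ``share $d-1$ vertices'' relation the family decomposes into pieces, each of which is geometrically either a \emph{star} (all its cliques contain a common $(d-1)$-set $S$, their apexes lying on the circle of points at distance $1$ from $S$) or the set of facets of a single regular unit $d$-simplex on $d+1$ vertices. In a star I send each clique to its apex; in a simplex piece I send each facet to the vertex it omits; in either case distinct cliques receive distinct vertices. Assembling these local injections into a single injection $\mathcal F\hookrightarrow V$ then yields $|\mathcal F|\le n$, matching the extremal regular $d$-simplex, where the bijection clique $\leftrightarrow$ omitted vertex is forced.

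The deepest input is the Mori\'c--Pach simplex lemma itself, which I take as given. Granting it, the main obstacle is making the global injection rigorous: I must control how the Johnson-pieces overlap and, above all, handle pairs of cliques meeting in \emph{exactly} $d-2$ vertices, whose links live on the $2$-sphere $\Sigma$ and which glue distinct stars and simplices together. The delicate point is that the assignment must behave like ``send each clique to a vertex it determines but does not contain,'' and one must verify that no two cliques are ever forced onto the same vertex in this mixed situation; this is where the geometric rigidity coming from the simplex lemma, rather than pure set-system combinatorics, has to be invoked.
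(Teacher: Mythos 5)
Your high-level skeleton matches the paper's: (a) show that any two $d$-cliques of the diameter graph share at least $d-2$ vertices, and (b) feed that into a counting argument giving at most $n$ cliques. But you have the two halves exactly backwards with respect to what is known and what must be proved. The step you explicitly ``take as given'' --- the Mori\'c--Pach simplex statement (Conjecture \ref{ph1}: two unit regular $(d-1)$-simplices whose union has diameter $1$ share $d-2$ vertices) --- was an \emph{open conjecture}, and proving it is the entire content of this paper. The paper does so by first establishing the weaker Theorem \ref{thcom} (any two $d$-cliques share \emph{one} vertex, in $\R^d$ and on $S^d_r$ with $r>1/\sqrt 2$) via induction on dimension, using the geometry of Reuleaux simplices and rugby balls, the projection Lemma \ref{lemimp}, a circumscribed-ball argument, and a rotation/perturbation of one simplex; it then iterates Theorem \ref{thcom} on the unit spheres around shared vertices (Lemma \ref{lemrad} keeping the radii above $1/\sqrt 2$) to reach $d-2$ common vertices. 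None of this machinery appears in your proposal, so as written the argument is circular with respect to the theorem's hard core.

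Conversely, the part you spend most of your effort on --- building an injection from cliques to vertices once every two cliques share at least $d-2$ vertices --- is precisely Theorem \ref{thPh} (Theorem 1 of Mori\'c--Pach) together with its spherical analogue from \cite{KP}, which the paper simply cites. Your Johnson-graph decomposition into stars and simplex-facet families is a plausible route to reproving it, but you yourself flag that the gluing of overlapping pieces and the collision-freeness of the assignment are unresolved; that is exactly where the work of \cite{Philip2} lies, so this half is at best a sketch of a known result. A further concrete error: the base case of your dimension induction cannot be $k=2$ via Hopf--Pannwitz, because the vertex-sharing statement is false for two unit edges in the plane (two crossing unit segments can have union of diameter $1$ and no common endpoint); the induction must bottom out at $d=3$, where the sharing statement is a nontrivial theorem of Dol'nikov and its spherical analogue is from \cite{Kup10}. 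In short: cite the counting theorem, and supply a proof of the simplex-intersection statement --- that is the theorem being asked for.
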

This was proved by H. Hopf and E. Pannwitz for $d=2$ in \cite{PH} and for $d=3$ by Z. Schur et. al. in \cite{Sch}. In the latter paper the authors also proved that $D_d(d+1,n)=1$. In \cite{Philip2} P. Mori\'c and J. Pach progressed towards resolving this conjecture. Namely, they showed that Schur's conjecture holds in the following special case:

\begin{thm}[Theorem 1 from \cite{Philip2}]\label{thPh}
Given a diameter graph $G$  on $n$ vertices in $\R^d$, the number of $d$-cliques in $G$ does not exceed $n$, provided that any two $d$-cliques share at least $d-2$ vertices.
\end{thm}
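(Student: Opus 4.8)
The plan is to bound the number of $d$-cliques by exhibiting a matching between cliques and vertices. Regard each $d$-clique as a $d$-element subset of $V$ which geometrically is a regular unit simplex on $d$ vertices (a $(d-1)$-dimensional face). I would first reduce the theorem to a Hall-type condition: to produce an injection $\mathcal F\to V$ from the family $\mathcal F$ of all $d$-cliques sending each clique to one of its own vertices, it suffices, by Hall's marriage theorem applied to the clique--vertex incidence bipartite graph, to verify that every subfamily $\mathcal G\subseteq\mathcal F$ spans at least $|\mathcal G|$ vertices, i.e. $\big|\bigcup_{A\in\mathcal G}A\big|\ge|\mathcal G|$. Since the hypothesis ``any two $d$-cliques share at least $d-2$ vertices'' is inherited by subfamilies, this reduces the entire theorem to one \emph{spanning inequality}: any family of $m$ $d$-cliques that pairwise share at least $d-2$ vertices occupies at least $m$ vertices; the bound $n$ then follows at once since the resulting injection lands in $V$.

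The geometric engine for the spanning inequality is the description of how cliques sharing a large face fit together. If a $(d-1)$-clique $C$ is fixed, the points completing it to a $d$-clique are exactly those at distance $1$ from all $d-1$ vertices of $C$, and these lie on a circle of radius $\sqrt{d/(2(d-1))}>1/\sqrt2$; likewise the completions of a fixed $(d-2)$-clique lie on a $2$-sphere of radius $\sqrt{(d-1)/(2(d-2))}>1/\sqrt2$. Two completions at distance exactly $1$ would, with the shared face, form a $(d+1)$-clique, and since $D_d(d+1,n)=1$ there is at most one such, so apart from one exceptional pair the completions of any fixed face are pairwise at distance strictly less than $1$. In the two ``star'' cases this already yields the inequality: cliques sharing a common $(d-1)$-face form a fan of $t$ cliques on $d-1+t\ge t$ vertices, while cliques sharing a common $(d-2)$-face correspond to the unit-distance edges of a diameter graph drawn on the $2$-sphere above, whose number is bounded (via the planar Hopf--Pannwitz phenomenon transported to a sphere of radius $>1/\sqrt2$) by the number $p$ of completion points, so these cliques occupy $d-2+p\ge p$ vertices.

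It remains to treat families with no common $(d-2)$-core. The combinatorial intersection pattern alone does not suffice here---one can build abstract families of $d$-sets pairwise sharing $d-2$ elements whose union keeps growing with no common core---so the argument must invoke rigidity. The plan is to form the ``sharing graph'' on $\mathcal F$, joining two cliques that meet in exactly $d-1$ vertices, to organize $\mathcal F$ around the $(d-1)$-faces it shares into fans, and to charge each clique to a fresh completion vertex, using the circle and sphere loci to show that distinct fans cannot repeatedly reuse the same vertices. The decisive geometric fact is that the height of a regular unit simplex exceeds $1$, so a reflected completion of a face lies at distance greater than $1$ from the original apex and is forbidden in a diameter-$1$ set; this, together with the threshold $1/\sqrt2$, prevents two ``independent'' unit simplices from coexisting unless their extra vertices are pinned into a rigid position, and this is what forces the union to grow.

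The step I expect to be the main obstacle is precisely this last one: controlling three or more cliques that pairwise share $d-2$ vertices without a common core---a ``triangle'' in the sharing pattern. Unlike the star cases these are not governed by a single circle or sphere, and ruling out a vertex deficit requires the full rigidity of regular unit simplices together with the $1/\sqrt2$ threshold, essentially the phenomenon captured by the Mori\'c--Pach simplex conjecture stated in the abstract. Establishing the spanning inequality in this regime, and thereby verifying Hall's condition, is where the real difficulty lies.
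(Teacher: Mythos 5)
First, a point of reference: the paper does not prove this statement at all --- it is quoted verbatim as Theorem 1 of Mori\'c and Pach \cite{Philip2} and used as a black box in the reduction of Schur's conjecture, so there is no in-paper proof to compare against. Judged on its own, your proposal has a genuine gap. The Hall-type reduction is fine (and in fact Hall is overkill: the spanning inequality applied to the full family already gives $|\mathcal F|\le|\bigcup_{A\in\mathcal F}A|\le n$), and the two ``star'' cases are handled correctly --- for a common $(d-1)$-face the count $d-1+t\ge t$ is trivial, and for a common $(d-2)$-face the completions do lie on a $2$-sphere of radius $\sqrt{(d-1)/(2(d-2))}>1/\sqrt2$ where the spherical Hopf--Pannwitz bound applies. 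But the entire content of the theorem sits in the case you defer: a family of $d$-cliques pairwise sharing at least $d-2$ vertices with no common $(d-2)$-core. As you yourself observe, the abstract combinatorics fails there (e.g.\ all $3$-sets meeting a fixed triple in at least two elements pairwise intersect yet can outnumber their union), so everything hinges on a geometric structure theorem for such families, and for that you offer only the words ``rigidity'' and a charging scheme whose key step --- that distinct fans cannot reuse completion vertices --- is asserted, not proved. That is precisely the part of the Mori\'c--Pach argument that does the work, and it is absent.

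Two smaller inaccuracies compound this. The ``decisive geometric fact'' you cite, that the height of a regular unit simplex exceeds $1$, is false: the height is $\sqrt{(d+1)/(2d)}<1$ for $d\ge2$; what exceeds $1$ is the distance $\sqrt{2(d+1)/d}$ between the two apexes on opposite sides of the base hyperplane, and the argument must be phrased in those terms. And the appeal to ``the Mori\'c--Pach simplex conjecture stated in the abstract'' points at the wrong tool: that conjecture (two unit $(d-1)$-simplices in a diameter-$1$ set share $d-2$ vertices) is exactly what the present paper proves in order to \emph{reduce} Schur's conjecture to Theorem \ref{thPh}; it cannot be an ingredient in proving Theorem \ref{thPh}, whose hypothesis already hands you the $d-2$ shared vertices. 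What is needed instead is a classification of how three or more such cliques without a common core can be realized in a set of diameter $1$, and that analysis is missing.
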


As it turns out, Schur's conjecture and related questions are tightly connected with analogous questions for spherical sets. The spherical analogues were studied in a few papers. In particular, in the paper \cite{KP} V. Bulankina et al. noted that the statement of Theorem \ref{thPh} holds for spheres of large radii: given a diameter graph $G$  on $n$ vertices in a $d$-dimensional sphere $S^d_r$ with radius $r>1/\sqrt 2$, the number of $d$-cliques in $G$ does not exceed $n$, provided that any two $d$-cliques share at least $d-2$ vertices (Theorem 4 from \cite{KP}). Moreover, they showed that Schur's conjecture holds for $S_r^3$ for $r>1/\sqrt 2$. To be precise, we formulate Schur's conjecture for spheres separately:

 \begin{gypo}[Schur's conjecture for spheres]\label{gypsph}
Any diameter graph $G$ on $n$ vertices (and with edges of unit Euclidean length) on a sphere $S_r^d$ with $r>1/\sqrt 2$ has at most $n$ $d$-cliques.
 \end{gypo}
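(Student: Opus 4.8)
The plan is to isolate a single geometric lemma that carries all the new content and to feed it into an already-available counting theorem. A $d$-clique of a diameter graph on $S_r^d$ is a set of $d$ points pairwise at Euclidean distance $1$, that is, a unit regular simplex inscribed in the sphere; and since the vertex set of $G$ has diameter $1$, any two $d$-cliques have all pairwise distances between their vertices at most $1$. I would therefore aim to prove the geometric lemma quoted in the abstract (the Mori\'c--Pach conjecture): two $d$-vertex unit regular simplices lying on $S_r^d$, all of whose cross-distances are $\le 1$, must share at least $d-2$ vertices. Granting this, any two $d$-cliques of $G$ are $(d-2)$-intersecting, and the spherical counting theorem (Theorem~4 of \cite{KP}, the spherical analogue of Theorem~\ref{thPh}), whose hypothesis is precisely this, bounds the number of $d$-cliques by $n$. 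Thus everything reduces to the lemma. I would also note at the outset that $r>1/\sqrt2$ makes every pair of points at distance $\le 1$ subtend a central angle $<\pi/2$, so a diameter-$1$ set lies in an open hemisphere; this is the feature that keeps the spherical lemma within reach of the Euclidean method.

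For the lemma I would argue by contradiction, assuming the two simplices $\Delta_1,\Delta_2$ share exactly $k\le d-3$ vertices and producing a cross-pair at distance $>1$. The first move is to quotient by the shared face. Write $S=\Delta_1\cap\Delta_2$ with $|S|=k$, and let $A,B$ be the disjoint sets of remaining vertices, $|A|=|B|=m:=d-k\ge 3$. Every vertex of $A\cup B$ is at distance $1$ from every vertex of $S$, so $A\cup B$ lies on the sphere $\Sigma$ of radius $t=\sqrt{1-\rho_k^2}$ about the centroid $c$ of $S$ inside the subspace through $c$ orthogonal to $\mathrm{aff}(S)$ (here $\rho_k$ is the circumradius of $S$). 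Taking $c$ as origin, one computes $\langle a_i,a_j\rangle=\langle b_i,b_j\rangle=s:=\tfrac1{2k}$ for $i\ne j$ and $\|a_i\|^2=\|b_j\|^2=t^2=s+\tfrac12$, so the lemma becomes the statement that two vertex-disjoint $m$-vertex unit regular simplices inscribed in a sphere of radius $t$ cannot satisfy $\langle a_i,b_j\rangle\ge s$ for all $i,j$. (The vertex-disjoint case $k=0$ admits a direct variant of the same computation.) Forming the $2m\times 2m$ Gram matrix, whose diagonal blocks equal the positive-definite $\tfrac12 I+sJ$ and whose rank is at most the ambient dimension $m+1$, I would deduce that $\mathrm{span}(A)$ and $\mathrm{span}(B)$ are $m$-dimensional subspaces meeting in dimension at least $m-1$: the two simplices are forced to be almost coplanar, leaving only a low-parameter family of relative positions.

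The hard part will be to eliminate this remaining family. I would fix coordinates adapted to the common $(m-1)$-dimensional subspace, express each $b_j$ through the vertices of $A$ and a single rotation angle $\phi$ in the two remaining normal directions, and track $\min_{i,j}\langle a_i,b_j\rangle$ as a function of $\phi$ and of the bijection matching $B$ to $A$; the target is to show this minimum is $<s$ unless $B=A$, which is exactly a cross-distance exceeding $1$. I expect the extremal and most delicate case to be $m=3$ (two unit equilateral triangles) with $k$ large, where $t^2=s+\tfrac12\to\tfrac12$ and the scale approaches the critical value $1/\sqrt2$ --- which is exactly why the spherical statement needs $r>1/\sqrt2$. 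I would try to reduce the required monotonicity to a one-variable trigonometric inequality and settle it by convexity rather than brute case-checking; the principal risk is that the clean rotation picture conceals several combinatorially distinct ways of labelling $B$ against $A$, each needing a separate check. Finally, the spherical lemma runs through the same scheme: after the quotient one has two unit simplices on a sphere of radius $t$ in $\R^{m+2}$ subject also to lying on $S_r^d$, and the hemisphere observation together with $r>1/\sqrt2$ is what lets the same inner-product estimate go through. Feeding the lemma into Theorem~4 of \cite{KP} then yields Conjecture~\ref{gypsph}.
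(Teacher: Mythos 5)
You have correctly identified the paper's own reduction: show that any two $d$-cliques share at least $d-2$ vertices, then invoke the spherical analogue of Theorem \ref{thPh} (Theorem 4 of \cite{KP}); the remark that $r>1/\sqrt 2$ confines a diameter-$1$ set to an open hemisphere is likewise used in the paper. The gap is in the geometric lemma, which is where all the content lives. Your quotient by the shared face is correct, but observe what it produces: for $k\ge 1$ shared vertices it yields two \emph{vertex-disjoint} unit regular simplices on $m=d-k$ vertices each, inscribed in a sphere of radius $t=\sqrt{(k+1)/(2k)}>1/\sqrt 2$ sitting in $\R^{m+1}$, with all cross-distances at most $1$ --- that is, precisely the vertex-disjoint instance of the same problem, one dimension lower and on a sphere. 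Hence the entire lemma is concentrated in the case $k=0$ and its spherical analogue, which you dispose of in one sentence; yet for $k=0$ there is no shared face, no distinguished common centre, and no common circumsphere, so the Gram-matrix computation you describe never gets started. That vertex-disjoint case is exactly Theorem \ref{thcom} of the paper, and proving it consumes all of the paper's machinery: Reuleaux simplices, rugby balls, the projection--distance Lemma \ref{lemimp}, the circumscribed-ball argument, and the rotation of one clique until a vertex of the other reaches the tip of the rugby ball, organized as an induction that descends through spherical dimensions along the very quotient you set up.

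Even in the range your framework does reach, the reduction to a single rotation angle is unjustified. The rank bound on the $2m\times 2m$ Gram matrix shows only that $\mathrm{span}(A)$ and $\mathrm{span}(B)$ are hyperplanes of $\R^{m+1}$ meeting in dimension at least $m-1$; it does not force $B$ to be the image of $A$ under a rotation in the two normal directions composed with a relabelling. $B$ is an arbitrary regular simplex inscribed in the $(m-1)$-sphere cut out on the radius-$t$ sphere by its own affine hull, so beyond the dihedral angle $\phi$ the configuration space carries an entire orthogonal group's worth (roughly $\binom{m}{2}$ parameters) of motions of $B$ inside its hyperplane, not a finite list of matchings. The one-variable trigonometric inequality you hope to isolate therefore does not capture the extremal problem; what you flag as ``the hard part'' is not a residual verification but the whole theorem.
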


In the paper \cite{Kup10} A. Kupavskii studied properties of  diameter graphs in $\R^4$, in particular proving the four-dimensional Schur's conjecture. The following theorem completes the description of the quantity $D_4(l,n)$ for different $l$:

\begin{thm}[Theorem 5 from \cite{Kup10}]\label{th10}
\begin{enumerate}~\\
\item For $n\ge 52$ we have $$D_4(2,n)=\begin{cases}\lceil n/2\rceil \lfloor n/2 \rfloor + \lceil n/2\rceil +1, \ \text{ if } n\not\equiv 3\ \mathrm{mod}\ 4, \\
                    \lceil n/2\rceil \lfloor n/2 \rfloor + \lceil n/2\rceil, \ \ \ \ \  \ \text{ if } n\equiv 3\ \mathrm{mod}\ 4. \\
                     \end{cases}$$ (In Corollary 3 from \cite{Swan} the same was proved for sufficiently large $n$.)
\item  For all sufficiently large $n$ we have
 $$D_4(3,n)= \begin{cases}(n-1)^2/4+ n, \ \ \ \ \ \text{ if } n\equiv 1\ \mathrm{mod}\ 4, \\
                    (n-1)^2/4+n-1, \text{ if } n\equiv 3\ \mathrm{mod}\ 4, \\
                    n(n-2)/4+ n, \ \ \ \ \ \! \text{ if } n\equiv 0\ \mathrm{mod}\ 2. \end{cases}$$
\item(Schur's conjecture in $\R^4$)  For all $n\ge 5$ we have $D_4(4,n) = n$.
\end{enumerate}
\end{thm}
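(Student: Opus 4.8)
The plan is to separate the three parts by difficulty and to dispatch the clique count in part (3) first, since it is the one that falls out directly from the simplex dichotomy advertised in the abstract. For the upper bound $D_4(4,n)\le n$ I would apply the $d=4$ instance of the Mori\'c--Pach statement proved in this paper: any two unit regular tetrahedra in $\R^4$ either share $d-2=2$ vertices or contain a pair of vertices at distance greater than $1$. In a diameter graph every pairwise distance is at most $1$, so the second alternative cannot occur and any two $4$-cliques must meet in at least $2$ vertices. This is precisely the hypothesis of Theorem \ref{thPh}, whose conclusion then bounds the number of $4$-cliques by $n$. For the matching lower bound I would exhibit a diameter graph on $n$ vertices carrying $n$ tetrahedra, generalizing the odd Reuleaux polygon used by Hopf--Pannwitz for $d=2$ and by Schur et al. for $d=3$. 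The base case $n=5$ is instructive: glue two unit regular tetrahedra along a common unit triangle, placing their apexes $u,v$ at distance $1$ in the $2$-plane orthogonal to the triangle (possible in $\R^4$ since $\cos\psi=1/4$ is attainable); this yields the three tetrahedra $\{u,v,x_i,x_j\}$ together with $\{u,x_1,x_2,x_3\}$ and $\{v,x_1,x_2,x_3\}$, i.e. exactly $5$ on $5$ vertices, and a cyclic/iterated version of this gadget maintains the ratio for all $n\ge 5$.

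Parts (1) and (2) are genuinely harder and, unlike (3), demand the full extremal structure of diameter graphs in $\R^4$ rather than a single clique inequality. The quadratic leading term $n^2/4$ is Erd\H os's, realised by the Lenz configuration: two circles of radius $1/\sqrt 2$ lying in orthogonal $2$-planes through the origin, arranged so that every point of one circle is at distance exactly $1$ from every point of the other. I would distribute the $n$ vertices into two arcs, one on each circle, each arc spanning at most $90^\circ$ so that within-cluster distances stay at most $1$; the $\lceil n/2\rceil\lfloor n/2\rfloor$ cross pairs furnish the quadratic term. The exact additive terms, together with their dependence on $n \bmod 4$, would be extracted by choosing the within-cluster arcs to be Reuleaux-type diameter graphs maximising the number of within-cluster unit distances, and the triangle count in part (2) follows the same scheme, now weighting each cross edge by the triangles it spans.

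For the upper bounds in (1) and (2) I would prove a stability/classification statement: any diameter graph in $\R^4$ with near-extremal edge count is, after deleting few vertices, a Lenz configuration of the type above. The recurring tool is the cross-sectional reduction --- the set of points at distance $1$ from a fixed vertex is a unit $3$-sphere, and from a fixed edge a $2$-sphere of radius $\sqrt 3/2$, both of radius exceeding $1/\sqrt 2$, so the spherical analogues valid for $r>1/\sqrt 2$ (Theorem 4 of \cite{KP} and the spherical Schur statement) control how cliques sit around a common face. I expect the main obstacle to be neither the strategy nor the leading term, but the tight bookkeeping of the lower-order terms: pinning down the exact additive constants and the congruence-class dependence requires a complete description of the extremal within-cluster planar and spherical diameter graphs (via Hopf--Pannwitz and its spherical refinement) and a careful accounting of how cross-cluster and within-cluster cliques interact, which is where the case analysis becomes delicate.
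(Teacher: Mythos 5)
The first thing to note is that the paper does not prove this statement at all: Theorem \ref{th10} is quoted verbatim as Theorem 5 of \cite{Kup10} in the introduction, purely as background for the general-$d$ result, so there is no proof in this paper to compare yours against. Judged on its own terms, your treatment of part (3) is sound and is essentially the specialization to $d=4$ of the machinery this paper builds for general $d$: the resolved Conjecture \ref{ph1} (with the \emph{strict} inequality $\|v_1-v_2\|>1$ in the second alternative, which is exactly what makes your ``cannot occur in a diameter graph'' step legitimate) forces any two $4$-cliques to share two vertices, and Theorem \ref{thPh} then gives the bound $n$; the lower bound via the regular $4$-simplex and its extensions is standard. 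Be aware, though, that the ``share $d-2$ vertices'' statement is itself obtained by two nested applications of a one-vertex-sharing theorem (Theorem \ref{thcom}) together with the radius control of Lemma \ref{lemrad}, none of which you supply, so even part (3) is an outline resting on the hardest result of the paper rather than an independent proof.

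Parts (1) and (2) are where the genuine gap lies. Your sketch correctly identifies the Lenz configuration (two circles in orthogonal $2$-planes with $r_1^2+r_2^2=1$) as the extremal example and correctly predicts that the upper bound must come from a stability argument reducing to extremal planar and spherical diameter graphs within each cluster. But the entire content of those parts of \cite{Kup10} (and of Corollary 3 of \cite{Swan}) is precisely the work you defer: proving that near-extremal diameter graphs in $\R^4$ really do decompose into two such clusters, determining the exact maximum number of within-cluster unit distances compatible with all cross distances equalling $1$ (this is where the additive terms $\lceil n/2\rceil+1$ versus $\lceil n/2\rceil$ and the dependence on $n\bmod 4$ arise), and carrying out the weighted count for triangles in part (2). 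Saying that this ``requires a complete description of the extremal within-cluster graphs and careful accounting'' is a restatement of the problem, not a proof; as written, parts (1) and (2) are not established.
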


In \cite{Kup10} the first author also studied diameter graphs on $S^3_r$ with $r>1/\sqrt 2$. In particular, he showed that an analogue of V\'azsonyi's conjecture holds for diameter graphs on spheres.

In the next section we present our main results and discuss related questions. In Section \ref{sec3} we introduce some basic objects that are used in the proof. In Section \ref{sec4} we present the proofs of the results.

\section{New results and discussion}\label{sec2}
The main result of this paper is the proof of Schur's conjecture both in the Euclidean space and on the sphere in any dimension:

\begin{thm}\label{thmain} Schur's conjecture holds  \\
1. In the space $\R^d$,\\
2. On the sphere $S^d_r$ of radius $r>1/\sqrt 2.$
\end{thm}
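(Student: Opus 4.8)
The plan is to obtain Theorem~\ref{thmain} as a short consequence of the conditional bound of Mori\'c and Pach (Theorem~\ref{thPh}), the only missing ingredient being the simplex lemma announced in the abstract. In other words, I would not attack Schur's conjecture directly; instead I would show that in a diameter graph the hypothesis of Theorem~\ref{thPh} is \emph{automatically} satisfied, so that its conclusion applies verbatim.

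I would start from the elementary dictionary between cliques and simplices. By definition a diameter graph $G$ has vertex set of diameter $1$, so every pair of vertices is at distance at most $1$ and every edge has length exactly $1$. Consequently a $d$-clique of $G$ is nothing but the vertex set of a unit regular simplex on $d$ vertices. Now take any two $d$-cliques $\Delta_1,\Delta_2$ and apply the Mori\'c--Pach simplex lemma: either $\Delta_1$ and $\Delta_2$ share at least $d-2$ vertices, or there are $v_1\in\Delta_1$ and $v_2\in\Delta_2$ with $\|v_1-v_2\|>1$. Since $G$ has diameter $1$, no two of its vertices can be farther than $1$ apart, so the second alternative is excluded. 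Hence any two $d$-cliques of $G$ share at least $d-2$ vertices.

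This is precisely the hypothesis of Theorem~\ref{thPh}, which then yields at most $n$ $d$-cliques; together with the standard construction realizing $n$ cliques (see \cite{Sch}) this gives $D_d(d,n)=n$ for $n\ge d+1$, proving part~1. For part~2 I would repeat the argument word for word on $S^d_r$ with $r>1/\sqrt 2$, substituting the spherical version of the simplex lemma for the Euclidean one and the spherical analogue of Theorem~\ref{thPh} (Theorem~4 of \cite{KP}) for Theorem~\ref{thPh}; the hypothesis $r>1/\sqrt 2$ is exactly what makes both spherical statements available.

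The genuine obstacle is thus not the reduction above but the simplex lemma itself, which I expect to be the technical heart of the whole paper. In the contrapositive it asserts that two unit regular simplices, with vertices $a_i$ and $b_j$ respectively, all of whose cross-distances are at most $1$ must overlap in at least $d-2$ vertices. A first move is an averaging identity: writing $c_1,c_2$ for the centroids one gets $\sum_{i,j}\|a_i-b_j\|^2=d(d-1)+d^2\|c_1-c_2\|^2$, so the cross-distance bound forces $\|c_1-c_2\|\le 1/\sqrt d$, i.e.\ the two centroids are close. The hard part is to upgrade ``close centroids plus all cross-distances at most $1$'' to ``large vertex overlap'': a rigid unit simplex spreads its vertices around its centroid, so if the two simplices were only mildly aligned, some pair of opposite vertices would be stretched past distance $1$. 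I would try to make this precise by an extremal/rigidity analysis, quotienting out the common face to reduce the dimension and arguing by induction, with the spherical case following from the same rigidity once $r>1/\sqrt 2$ keeps the extremal configuration close to the Euclidean one.
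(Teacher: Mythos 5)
Your reduction is exactly the one the paper uses: confirm the Mori\'c--Pach simplex statement (Conjecture~\ref{ph1} and its spherical version), note that in a diameter graph the alternative $\|v_1-v_2\|>1$ is excluded, and feed the resulting ``any two $d$-cliques share $d-2$ vertices'' into Theorem~\ref{thPh} (resp.\ its spherical analogue from \cite{KP}). The dictionary between $d$-cliques and unit regular $(d-1)$-simplices, the centroid identity $\sum_{i,j}\|a_i-b_j\|^2=d(d-1)+d^2\|c_1-c_2\|^2$, and the resulting bound $\|c_1-c_2\|\le 1/\sqrt d$ are all correct. But everything after that is a declaration of intent, not a proof, and the declared plan has two concrete structural problems.

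First, ``quotienting out the common face and arguing by induction'' presupposes that a common face exists; the entire difficulty is to show that two vertex-disjoint unit regular $(d-1)$-simplices cannot have all cross-distances at most $1$, i.e.\ that they share at least \emph{one} vertex. This is the paper's Theorem~\ref{thcom}, and its proof occupies essentially the whole paper (Reuleaux simplices, rugby balls, the projection Lemma~\ref{lemimp}, and the rotation argument in Case~(ii)); a centroid estimate of $1/\sqrt d$ does not rule out, a priori, two interleaved simplices with no common vertex. Second, once one common vertex is found, the remaining vertices lie on the unit sphere centred at it, which by Lemma~\ref{lemrad} has radius $\sqrt{(d+1)/(2d)}>1/\sqrt 2$; the induction therefore must run through the \emph{spherical} version of the statement even to prove the Euclidean case. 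Treating the spherical case as a perturbative afterthought (``the extremal configuration is close to the Euclidean one'') inverts the actual logical dependence. Finally, a soft rigidity or compactness analysis of the extremal configuration would naturally deliver the conclusion with a non-strict inequality, which is precisely the version (Maehara's theorem, equivalent to Conjecture~\ref{ph2}) that the paper points out is \emph{not} sufficient to deduce Theorem~\ref{thcom}; the strictness has to be extracted from an explicit geometric argument such as the equality analysis in Lemma~\ref{lemimp}. So the gap is the simplex lemma itself: the reduction is right, but the heart of the theorem remains unproved in your proposal.
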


The proof of the first part actually relies heavily on the second part, so the questions for the Euclidean space and for the sphere are indeed interconnected.\\

\textbf{Remark.} Note that throughout the article by a $k$-simplex in $\R^d$ we mean a set of $k+1$ vertices in $\R^d$ in general position.\\

Next we discuss several questions mentioned in the paper \cite{Philip2}. Since the authors of \cite{Philip2} proved Theorem \ref{thPh}, they naturally raised the following problem:

\begin{gypo}[F. Mori\'c and J. Pach, Problem 1 from \cite{Philip2}]\label{ph1} Any two unit regular simplices on $d$ vertices in $\R^d$ must share at least $d-2$ vertices, provided the diameter of their union is 1.
\end{gypo}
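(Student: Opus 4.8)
The plan is to reformulate the statement as a lower bound on the overlap and prove it by induction on $d$, treating the Euclidean and spherical cases in parallel (with the Euclidean case feeding off the spherical one, exactly as announced after Theorem~\ref{thmain}). Write the two simplices as vertex sets $A$ and $B$, each a regular unit simplex on $d$ vertices, and assume $A\cup B$ has diameter $1$; the goal is to show $|A\cap B|\ge d-2$. The engine of the proof is a reduction that peels off the common vertices and lands on a sphere of controlled radius.

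Concretely, suppose $A$ and $B$ share a set $W=A\cap B$ of $k\ge 1$ vertices. Every vertex of $A\setminus W$ and of $B\setminus W$ is at distance exactly $1$ from each point of $W$, hence equidistant from all of $W$; so it lies in the affine subspace $N$ of dimension $m+1:=d-k+1$ through the centroid $c_W$ of $W$ and orthogonal to $\mathrm{aff}(W)$, at distance $\sqrt{1-R_W^2}$ from $c_W$, where $R_W=\sqrt{(k-1)/(2k)}$ is the circumradius of $W$. A direct computation gives $\sqrt{1-R_W^2}=\sqrt{(k+1)/(2k)}>1/\sqrt2$. Thus $A\setminus W$ and $B\setminus W$ are two regular unit simplices on $m=d-k$ vertices lying on a sphere $S^m_\rho$ with $\rho>1/\sqrt2$, their union still has diameter at most $1$, and they are \emph{disjoint} (since $W$ was the entire intersection). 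If $m\ge 3$, the spherical case of the statement in dimension $m<d$, available by the inductive hypothesis, forces them to share at least $m-2\ge 1$ vertices, contradicting disjointness. Hence $m\le 2$, i.e. $k\ge d-2$, as desired. The same reduction (replacing the flat equidistant locus by the corresponding small sphere) lets the spherical case in dimension $d$ reduce to the spherical case in dimension $m<d$.

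This leaves exactly one thing, which is the crux of the whole argument: the base of the induction, namely that \emph{two vertex-disjoint regular unit simplices on $d$ vertices cannot have a union of diameter $1$ once $d\ge 3$}, both in $\R^d$ and on $S^d_r$ with $r>1/\sqrt2$. Averaging alone is too weak: normalizing $c_A$ to the origin one quickly gets $\|c_B\|\le 1/\sqrt d$ together with a family of per-vertex inequalities, but these are satisfiable and yield no contradiction, so the contradiction must come from the rigidity of the regular simplex rather than from mean values. I would therefore produce an explicit long diagonal. Using that from any vertex all diameter-neighbours subtend pairwise angles at most $60^\circ$ (if $\|p-q\|=\|p-r\|=1$ and $\|q-r\|\le 1$ then $\angle qpr\le 60^\circ$), one constrains how $B$ can sit inside the intersection of the unit balls centred at the vertices of $A$, and then exhibits a vertex of $A$ and a vertex of $B$ forced onto (nearly) opposite sides of the common centroid, so that their distance exceeds $1$; the model case is the centred antipodal pair $b=-a$, already at distance $2R_W=\sqrt{2(d-1)/d}>1$ for $d\ge 3$. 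Making this "opposite vertices" mechanism quantitative and orientation-independent, and transporting it faithfully to $S^d_r$ (where it is needed in every dimension of the induction, and from which the Euclidean disjoint case must in turn be extracted), is the main obstacle; everything else is the clean reduction above.
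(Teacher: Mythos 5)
Your reduction is sound and is essentially the paper's own (Subsection~\ref{secp1}): peeling off the common vertices places the residual simplices on a sphere of radius $\sqrt{(k+1)/(2k)}>1/\sqrt 2$, exactly as in Lemma~\ref{lemrad}, and the conclusion $|A\cap B|\ge d-2$ then follows once one knows that two \emph{vertex-disjoint} unit regular simplices on $m$ vertices, $m\ge 3$, cannot coexist in a set of diameter $1$ in $\R^m$ or on $S^m_r$ with $r>1/\sqrt 2$. But that disjointness statement is Theorem~\ref{thcom}, and it is not a ``base case'' to be dispatched at the end --- it is the entire technical content of the paper, needed at every level of your induction (including $k=0$ at the top). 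Your proposal stops at precisely the point where the real proof begins.

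The gap is concrete: the ``long diagonal / opposite vertices'' mechanism you sketch is only a heuristic, and you yourself flag that making it quantitative and orientation-independent is ``the main obstacle.'' The paper's proof of Theorem~\ref{thcom} does not run through any antipodal-pair argument. It intersects the unit balls around the vertices of $K_1$ to form a rugby ball $\Theta$, proves a sharp distance lemma (Lemma~\ref{lemimp}) characterizing exactly when two points of $\Theta^+$ can be at distance $1$ (via repeated projection onto faces of the Reuleaux simplex, Lemmas~\ref{lemrelo} and~\ref{lemred}), and then splits into cases according to how the vertices of $K_2$ distribute across the hyperplane of $K_1$: the balanced case is killed by a circumscribed-sphere rigidity argument (all of $K_2$ is forced onto the sphere $S$ and hence, by Observation~\ref{obs01}, projects into $T$), while the unbalanced case requires a continuous rotation of one vertex of $K_1$, with a careful monotonicity analysis of the bodies $B'$ and $\Theta'$, terminating either in a contradiction or in a reduction to the spherical statement one dimension lower. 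None of this machinery, nor any substitute for it, appears in your proposal; in particular the $60^\circ$ angle bound and the centroid estimate $\|c_B\|\le 1/\sqrt d$ that you mention are, as you note, satisfiable constraints and cannot by themselves produce the required pair at distance greater than $1$. As written, the proposal proves only the (comparatively easy) reduction and leaves Conjecture~\ref{ph1} open.
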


We confirm this conjecture (and its spherical version) in our paper, which together with Theorem \ref{thPh} and its spherical analogue from \cite{KP}, mentioned in the previous section, gives us the proof of Schur's conjecture both in the space and on the sphere. Another problem the authors of \cite{Philip2} raised deals with irregular simplices.

\begin{gypo}[Conjecture 3 from \cite{Philip2}]\label{ph2} Let $a_1\ldots,a_d$ and $b_1\ldots,b_d$ be two simplices on $d$ vertices in $\R^d$ with $d\ge 3$, such that all
their edges have length at least 1. Then there exist $i, j \in \{1,\ldots, d\}$ such that $\|a_i-b_j\|\ge 1$.
\end{gypo}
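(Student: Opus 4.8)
The plan is to argue by contradiction: suppose that $\|a_i-b_j\|<1$ for all $i,j\in\{1,\dots,d\}$, and derive a contradiction. Write $c_A=\frac1d\sum_i a_i$ and $c_B=\frac1d\sum_j b_j$ for the two centroids. My first step is the exact second-moment identity, obtained by expanding each cross term about the centroids and using $\sum_i(a_i-c_A)=\sum_j(b_j-c_B)=0$:
\begin{equation}\label{eq:secondmoment}
\sum_{i,j}\|a_i-b_j\|^2=\sum_{i<j}\|a_i-a_j\|^2+\sum_{i<j}\|b_i-b_j\|^2+d^2\|c_A-c_B\|^2.
\end{equation}
Each of the $d^2$ terms on the left is $<1$, while each of the $2\binom d2=d(d-1)$ intra-simplex terms on the right is $\ge1$; hence $\|c_A-c_B\|^2<1/d$, and the slack in this estimate vanishes exactly when all edges equal $1$ and the two centroids coincide, pinpointing the extremal object as a pair of regular unit simplices with a common centroid. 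A complementary observation is that, since each $b_j$ is within distance $1$ of every $a_i$, it lies in $\bigcap_i \overline B(a_i,1)$ (and symmetrically each $a_i$ lies in $\bigcap_j \overline B(b_j,1)$); in particular every edge of $A$, and of $B$, has length in $[1,2]$, so the whole configuration is bounded.

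The heart of the argument is to reduce the general, irregular case to the regular one. Intuitively the regular unit simplex is the most compact simplex with all edges $\ge1$ --- it has the least diameter and circumradius --- so lengthening any edge beyond $1$ only makes it harder to keep the two simplices mutually close, which should only help create a cross-pair of length $\ge1$. I would make this precise by minimising $\max_{i,j}\|a_i-b_j\|$ over all pairs of simplices with every edge $\ge1$. The boundedness above makes this a compact problem modulo isometries, so a minimiser $C^\ast$ exists; by the contradiction hypothesis its value $\mu$ satisfies $\mu<1$. I would then run a first-order variational analysis at $C^\ast$, keeping track of which cross-pairs are active (attain the maximum) and which edges are active (equal $1$): one aims to show that any edge strictly longer than $1$, or any deviation from the symmetric regular position, admits an infinitesimal motion of the vertices that strictly decreases the maximal cross-distance without violating the edge constraints. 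Hence the minimiser must consist of two regular unit simplices.

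It then remains to treat the regular case, where the assumption $\|a_i-b_j\|<1$ for all $i,j$ forces the diameter of $\Delta_1\cup\Delta_2$ to equal $1$. The confirmed Conjecture~\ref{ph1} applies and the two simplices share at least $d-2\ge1$ vertices. Choosing a common vertex $v$ and any other vertex $b$ of the second simplex, the cross-pair $(v,b)$ has $\|v-b\|=1$, contradicting $\mu<1$ and completing the proof. The step I expect to be the main obstacle is the reduction: the two simplices are rigid bodies whose $2d$ vertices are coupled through all $d^2$ cross-constraints, so verifying that oversized edges and asymmetries can always be removed by a feasible motion that strictly lowers $\max_{i,j}\|a_i-b_j\|$ --- that is, the rigidity and optimality analysis at the extremal configuration --- is the delicate part, and it is where the precise geometry of the intersections $\bigcap_i \overline B(a_i,1)$ must be used in earnest.
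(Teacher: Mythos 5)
Your opening reductions are fine: the second-moment identity is correct and gives $\|c_A-c_B\|^2<1/d$, and the containment of each $b_j$ in $\bigcap_i \overline B(a_i,1)$ does bound the configuration, so a minimiser of $\max_{i,j}\|a_i-b_j\|$ exists (after fixing the configuration up to isometry, and modulo the possibility that the minimiser is a degenerate, affinely dependent point set, which your set-up does not exclude and to which Conjecture \ref{ph1} would not apply). The genuine gap is the step you yourself flag as delicate: the claim that the minimiser must be a pair of \emph{regular unit} simplices. Nothing in your sketch supports this. A first-order analysis at a minimiser of a max-function subject to the non-convex constraints $\|a_i-a_k\|\ge 1$, $\|b_j-b_l\|\ge 1$ yields only stationarity conditions of Karush--Kuhn--Tucker type: for each vertex, the zero vector must lie in the convex cone spanned by the gradients of the active cross-pairs and the active edge constraints. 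Deriving regularity of both simplices from these conditions is not a routine computation, and it is exactly where the whole difficulty of the conjecture sits; as written, your argument proves the statement only modulo the assertion ``the extremal pair is regular'', which is essentially as strong as the statement itself. (The final step, by contrast, is sound: once both simplices are regular and unit with all cross-distances less than $1$, the union has diameter $1$, Conjecture \ref{ph1} gives a shared vertex for $d\ge3$, and an edge of the other simplex at that vertex is a cross-pair of length exactly $1$, contradicting $\mu<1$.)

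For comparison, the paper does not prove Conjecture \ref{ph2} in full by its own methods: it points out that the full statement is Maehara's theorem \cite{MaeS}, and its own contribution, Theorem \ref{thirr} (whose proof is only sketched), treats the case where \emph{one} of the two simplices is regular. Even that partial extension to irregular simplices is not achieved by deforming them into regular ones; the extra ingredient is a monotonicity statement --- a simplex with all edges at least $1$ has circumradius at least that of the regular unit simplex, obtained from Kirszbraun's theorem \cite{Ak} --- which lets the rugby-ball and Reuleaux-simplex machinery run for the irregular simplex as it stands. To complete your approach you would either have to carry out the extremal classification in earnest or, more realistically, replace the variational reduction by a comparison argument of this monotone type.
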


By slightly modifying the proof of Theorem \ref{thmain} it is not difficult to obtain the following theorem:
\begin{thm}\label{thirr} Consider a regular unit simplex $\{a_1,\ldots, a_d\}$ and a simplex  $\{b_1,\ldots,\\ b_d\}$ in $\R^d$ (or on $S^d_r$ with $r>1/\sqrt 2$), where the second simplex satisfies the property $\|b_i-b_j\|\ge 1$ for $i\neq j$. Then either these two simplices share $d-2$ vertices, or $\|a_i-b_j\|>1$ for some $i,j\in\{1,\ldots, d\}$.
\end{thm}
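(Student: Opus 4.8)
The plan is to establish the dichotomy by assuming the second alternative fails: suppose $\|a_i-b_j\|\le 1$ for all $i,j\in\{1,\dots,d\}$, and deduce that $|\{a_1,\dots,a_d\}\cap\{b_1,\dots,b_d\}|\ge d-2$, which is exactly the situation treated in the regular case underlying Theorem \ref{thmain}. Normalise so that the regular simplex has centroid $c$ at the origin inside its supporting hyperplane $H$, with unit normal $e$ and $\|a_i\|=R:=\sqrt{(d-1)/(2d)}$. The assumption $\|a_i-b_j\|\le1$ says precisely that every $b_j$ lies in the lens $\Omega:=\bigcap_{i=1}^d\overline B(a_i,1)$, and this is the only way the vertices $b_j$ enter the hypotheses apart from the pairwise bounds. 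Summing the defining inequalities of $\Omega$ over $i$ and using $\sum_i a_i=0$ gives $\|b_j\|\le\rho:=\sqrt{(d+1)/(2d)}$, with equality only at the two apexes $\pm\rho e$; since $R^2+\rho^2=1$, a short computation shows that each apex lies at distance exactly $1$ from every vertex $a_i$.

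With this normalisation I would reproduce the structural heart of the regular argument. Writing $b_j=(y_j,t_j)$ with $y_j\in H$, membership in $\Omega$ confines $y_j$ to the shrunken Reuleaux simplex $\bigcap_i\overline B_H(a_i,\sqrt{1-t_j^2})$; in particular the slice $\Omega\cap H$ is contained in $\overline B(c,R)$ and meets the sphere of radius $R$ only in the vertices $a_i$ themselves, while the admissible radius in $H$ strictly decreases as $|t_j|$ grows. These facts force a family of points of $\Omega$ at pairwise distance $\ge1$ to be rigid: of the $d$ points $b_j$, all but at most two must coincide with vertices of the regular simplex, the extremal instance being the hinge configuration that realises the bound $d-2$. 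Carrying out this rigidity analysis is the substance of the proof, and it yields $|\{a_i\}\cap\{b_j\}|\ge d-2$.

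The decisive observation for the present generalisation is that every inequality in this analysis involves the points $b_j$ only through the two one-sided constraints $\|a_i-b_j\|\le1$ (membership in $\Omega$) and $\|b_i-b_j\|\ge1$ (pairwise separation), and never through the exact value $\|b_i-b_j\|=1$ or any upper bound on an edge of the second simplex. Hence I would go through the regular proof and, wherever ``$\{b_1,\dots,b_d\}$ is a unit regular simplex'' is invoked, substitute the weaker hypothesis $\|b_i-b_j\|\ge1$ for $i\ne j$; enlarging an edge only drives the relevant $b_j$ farther out and so makes the confinement by $\Omega$ more, not less, binding, so each step survives and the conclusion follows. The spherical statement is handled identically after re-deriving the confinement facts on $S^d_r$, where the hypothesis $r>1/\sqrt2$ is exactly what keeps the lens small enough for the apex uniqueness to persist.

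The main obstacle is therefore not a new geometric idea but a careful audit: I must verify that no step tacitly uses an upper bound on $\|b_i-b_j\|$, most plausibly in the step pinning down the two exceptional vertices, where regularity of the second simplex was convenient. The safe route is to replace each equality descending from that regularity by the one-sided inequality it specialises and then check that the hinge configuration remains the unique tight case; re-establishing the apex uniqueness and the ``radius $R$ only at vertices'' property on the sphere, where the clean identity $R^2+\rho^2=1$ must be replaced by its spherical counterpart, is the remaining point demanding care.
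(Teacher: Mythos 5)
Your overall strategy --- rerun the machinery of the regular case and check that the second simplex enters only through one-sided distance constraints --- is exactly the route the paper takes (it presents Theorem \ref{thirr} as a slight modification of the proof of Theorem \ref{thmain}). But your central claim, that every step of that machinery uses the points $b_j$ only through $\|a_i-b_j\|\le 1$ and $\|b_i-b_j\|\ge 1$ and never through an exact edge length of the second simplex, fails at one specific and essential place, and that place is precisely the ``main additional ingredient'' the paper singles out. In Case (i) of the proof of Theorem \ref{thcom} (all of $K_2$ trapped in the ball $B$ circumscribed around $K_1$, of radius $\sqrt{(d-1)/(2d)}$), the argument uses that the \emph{minimal enclosing ball} of the second simplex has radius \emph{equal} to $\sqrt{(d-1)/(2d)}$, which for a regular unit simplex is an exact computation; this forces $B$ to be that minimal ball and pushes all vertices of $K_2$ onto $S$. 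For a simplex with edges merely $\ge 1$ you need the non-obvious lower bound: the smallest ball containing $\{b_1,\ldots,b_d\}$ has radius at least that of a regular unit $(d-1)$-simplex. The paper obtains this from Kirszbraun's theorem (the map sending each $b_i$ to the corresponding vertex of a regular unit simplex is nonexpansive, so it extends to a nonexpansive map of the whole space, which one applies to the center of the minimal ball). Your heuristic that ``enlarging an edge drives $b_j$ farther out, so the confinement is more binding'' does not produce this quantitative bound, and without it the audit does not close.

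A secondary point you leave unaddressed is the induction and descent structure. The conclusion that the simplices share $d-2$ vertices is reached in the paper by proving that they share one vertex, descending to the unit sphere around it, and repeating; for the irregular simplex this descent needs the extra (easy but necessary) remark that once $a_i=b_j$ is a common vertex, every other $b_k$ satisfies $\|a_i-b_k\|=\|b_j-b_k\|\ge 1$, so either the second alternative of the theorem already holds or all remaining $b_k$ lie exactly on the unit sphere around the common vertex, and the induction can proceed. Your single-shot ``rigidity analysis'' in the rugby ball, which is where you locate ``the substance of the proof,'' is left as a placeholder rather than carried out, so as written the proposal identifies the right framework but omits both the one genuinely new lemma and the reduction that makes the dimension descent legitimate.
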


This theorem solves Conjecture \ref{ph2} in a stronger form in the case where one of the two simplices is regular.  We omit the proof, but the main additional ingredient needed is that the radius of the smallest ball that contains the simplex $b_1,\ldots, b_d$ is at least as big as for a regular unit $(d-1)$-simplex, provided that $\|b_i-b_j\|\ge 1$ for $i\neq j$. This, in turn, is an easy application of Kirszbraun's theorem (see \cite{Ak} for a short and nice proof):

\begin{thm}[Kirszbraun's theorem] Let $U$ be a subset of $X$, where $X$ is $\R^d$, $S^d$ or $H^d$ (a $d$-dimensional hyperbolic space). Then any nonexpansive map $f : U \to X$ can be extended to a nonexpansive map $f': X\to X$. A nonexpansive map $f:Y\to X$ is a map which satisfies $\|f(a)-f(b)\|\le \|a-b\|$ for any $a,b\in Y$.
\end{thm}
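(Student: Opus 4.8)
The plan is to prove the theorem first in the Euclidean case $X=\R^d$, where the argument is cleanest, and then to indicate the modifications for $S^d$ and $H^d$. The overall structure is a reduction to a \emph{one-point extension}: using Zorn's lemma, choose a maximal nonexpansive extension $f'$ of $f$ among all nonexpansive maps defined on subsets of $X$ containing $U$. If the domain $U'$ of $f'$ were not all of $X$, pick a point $p$ outside it; it then suffices to show that $f'$ can be extended to the single extra point $p$ without destroying the nonexpansive property. This contradicts maximality and forces the domain to be $X$.

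The one-point extension reduces to a ball-intersection problem. Writing $v_u:=f'(u)$, we must find a value $q$ with $\|q-v_u\|\le\|p-u\|$ for every $u\in U'$, i.e.\ a common point of the closed balls $B_u:=B\bigl(v_u,\|p-u\|\bigr)$. Each $B_u$ is compact and convex, so by Helly's theorem together with a compactness argument it suffices to show that every $d+1$ of these balls have a common point. Hence the whole theorem rests on the following finite statement, which is the heart of the matter: given points $u_1,\dots,u_k$ and $v_1,\dots,v_k$ in $\R^d$ with $\|v_i-v_j\|\le\|u_i-u_j\|$ for all $i,j$, and a point $p$, the balls $B(v_i,r_i)$ with $r_i:=\|p-u_i\|$ have a common point.

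To prove this key lemma I would use a minimal-dilation argument. Let $\lambda\ge 0$ be the least number for which $\bigcap_i B(v_i,\lambda r_i)\ne\emptyset$; it exists by compactness, and the goal is to show $\lambda\le 1$. Let $q$ be the common point for this minimal $\lambda$. By minimality $q$ cannot be moved so as to strictly decrease all active distances $\|q-v_i\|=\lambda r_i$; the standard Chebyshev-center characterization then yields weights $\mu_i\ge 0$ with $\sum_i\mu_i=1$, supported on the active indices, such that $q=\sum_i\mu_i v_i$. The conclusion follows from the variance identity
\[
\sum_i\mu_i\|q-v_i\|^2=\tfrac12\sum_{i,j}\mu_i\mu_j\|v_i-v_j\|^2,
\]
applied to both configurations. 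Its left-hand side equals $\lambda^2\sum_i\mu_i r_i^2$, while for the $u$'s the parallel-axis decomposition together with the same identity gives $\sum_i\mu_i r_i^2=\sum_i\mu_i\|p-u_i\|^2\ge\tfrac12\sum_{i,j}\mu_i\mu_j\|u_i-u_j\|^2$. Combining these with the contraction hypothesis $\|v_i-v_j\|\le\|u_i-u_j\|$ yields $\lambda^2\sum_i\mu_i r_i^2\le\sum_i\mu_i r_i^2$, hence $\lambda\le 1$, as desired (the case $\sum_i\mu_i r_i^2=0$ being trivial).

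Finally, for $X=S^d$ and $X=H^d$ the same scheme works, and the main obstacle is replacing the Euclidean variance identity by its comparison-geometry analogue. On the sphere one must restrict attention to balls contained in an open hemisphere to retain convexity and the validity of Helly's theorem, which is harmless here since the relevant radii are bounded. The spherical and hyperbolic laws of cosines convert the variance identity into the appropriate inequalities relating the pairwise distances $d(v_i,v_j)$ and $d(u_i,u_j)$ to the distances from a weighted centroid, and the monotonicity of this comparison between the two configurations again forces $\lambda\le 1$. This curvature comparison step is the one place where genuine care is required; the Euclidean argument is otherwise entirely formal.
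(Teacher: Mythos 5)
The paper itself contains no proof of this statement: Kirszbraun's theorem is quoted as a known classical result, with the proof delegated to the cited constructive argument of Akopyan and Tarasov \cite{Ak}, so your proposal can only be measured against the standard literature. Your Euclidean argument is the classical proof and is essentially correct and complete: Zorn's lemma reduces the problem to a one-point extension; that extension exists iff the compact convex balls $B\bigl(f'(u),\|p-u\|\bigr)$ have a common point; Helly's theorem plus the finite intersection property reduces this to the finite key lemma; and your minimal-dilation argument is sound, since at the minimal $\lambda$ the point $q$ lies in the convex hull of the active centers, the identity $\sum_i\mu_i\|q-v_i\|^2=\tfrac12\sum_{i,j}\mu_i\mu_j\|v_i-v_j\|^2$ holds for any weights with $q=\sum_i\mu_i v_i$, $\sum_i \mu_i = 1$, and the parallel-axis bound $\sum_i\mu_i\|p-u_i\|^2\ge\tfrac12\sum_{i,j}\mu_i\mu_j\|u_i-u_j\|^2$ together with the contraction hypothesis indeed forces $\lambda^2\sum_i\mu_i r_i^2\le\sum_i\mu_i r_i^2$.

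The genuine gap is your final paragraph, and it matters, because the spherical case is exactly what this paper needs (Theorem \ref{thirr} applies the circumradius consequence on $S^d_r$). Three concrete problems. First, on $S^d$ the radii $r_i=\rho(p,u_i)$ can be as large as $\pi$, so the balls $B(v_i,\lambda r_i)$ need not be geodesically convex nor contained in an open hemisphere, and Helly's theorem fails for such caps; your remark that this is ``harmless since the relevant radii are bounded'' is unjustified --- bounded by $\pi$ does not restore convexity, and the theorem as stated puts no smallness restriction on $U$ or $f$. Second, the variance identity has no exact analogue in constant nonzero curvature; the needed substitute is a nontrivial comparison inequality between the two configurations, which is the actual content of Valentine's extension of Kirszbraun's theorem to $S^d$ and $H^d$ (and of the Lang--Schroeder CAT$(\kappa)$ theorem). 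Saying that the laws of cosines ``convert the variance identity into the appropriate inequalities'' asserts precisely the statement that must be proved. Third, even the convex-hull characterization of the minimal-$\lambda$ point requires care on the sphere: the first-variation argument for moving $q$ toward the hull of the active centers needs all active distances to lie in $(0,\pi)$ and the active centers to be suitably positioned relative to $q$. As written, your proof establishes only the $\R^d$ case; for a uniform treatment of all three geometries you should either carry out the curvature comparison in detail or follow the constructive proof in \cite{Ak}, which the paper cites for exactly this reason.
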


After having prepared the first version of this paper, we came across a paper by H. Maehara \cite{MaeS}, in which the author studies a seemingly unrelated concept of \textit{sphericity} of a graph: given a graph $G$, the \textit{sphericity} of $G$ is the minimum dimension in which the vertices of the graph can be represented as unit spheres in such a way that two spheres intersect (or touch) iff the corresponding vertices are connected by an edge. In  \cite{MaeS} the author discusses the sphericity of complete bipartite graphs. And, as it turned out, the main result of the paper is, in fact, the proof of  Conjecture \ref{ph2}, which was given 20 years before the conjecture was formulated! For a bit more on Maehara's result in the context of Schur's conjecture see Section \ref{secp1}.

Finally, in the paper \cite{Philip2} the authors raised the following general problem:

\begin{prb}[Problem 6 from \cite{Philip2}]\label{ph3} For a given $d$, characterize all pairs $k,l$ of integers such that for any set of $k$ red and $l$ blue points in $\R^d$ we can choose a red point $r$ and a blue point $b$ such that $\|r-b\|$ is at least as large as the smallest distance between two points of the same color.
\end{prb}

 For $k=d+1$ and $l=\lfloor\frac{d+1}2\rfloor$ it is not difficult to construct an example of two regular unit simplices in $\R^d$ on $k$ and $l$ vertices respectively, such that the distance between any two vertices from different simplices is smaller than 1, which we describe at the end of the next section. (An analogous, but somewhat different, example appeared in the latter version of the paper \cite{Philip2}.) We think that this is the extremal example, thus, we conjecture the following.

\begin{gypo}\label{gypo11} Given two unit simplices in $\R^d$, one on $d+1$ vertices, the other on $\lfloor\frac{d+1}2\rfloor +1$ vertices, either they share a vertex, or the diameter of their union is strictly larger than 1.
\end{gypo}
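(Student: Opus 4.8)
The plan is to argue by contraposition: assuming that the two simplices share no vertex and that every pair of points, one from each simplex, is at distance at most $1$, I would derive a contradiction. Write $A=\{a_0,\dots,a_d\}$ for the regular unit $d$-simplex and $B=\{b_0,\dots,b_s\}$ for the regular unit $s$-simplex, where $s=\lfloor (d+1)/2\rfloor$, so that $B$ has $s+1$ vertices. Normalize so that the centroid of $A$ is the origin; then $\sum_i a_i=0$, $\|a_i\|^2=R^2$ with $R^2=\frac{d}{2(d+1)}$, and $\langle a_i,a_j\rangle=-\frac{1}{2(d+1)}$ for $i\neq j$. The identity that drives everything is that the vertices of a regular simplex form a tight frame: by symmetry $\sum_i a_ia_i^{\mathsf T}=\lambda I$, and taking traces gives $\lambda=\tfrac12$, so
\[
\sum_{i=0}^{d}\langle a_i,x\rangle^2=\tfrac12\|x\|^2\qquad\text{for every }x\in\R^d .
\]

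First I would record what the hypothesis $\|a_i-b_j\|\le 1$ imposes. Expanding and summing over $i$ (using $\sum_i a_i=0$) yields $\|b_j\|^2\le 1-R^2=\frac{d+2}{2(d+1)}$ for every $j$; in particular each $b_j$ may sit strictly farther from the origin than the $a_i$ do, which is the quantitative reason the midpoint construction on $s$ vertices is extremal. Next, setting $x_{ij}=\langle a_i,b_j\rangle$, the hypothesis reads $x_{ij}\ge\tfrac12(R^2+\|b_j\|^2-1)=:-\tfrac12 t_j$ with $t_j\ge0$, while the tight-frame identity gives $\sum_i x_{ij}=0$, $\sum_i x_{ij}^2=\tfrac12\|b_j\|^2$, and $\sum_i x_{ij}x_{ik}=\tfrac12\langle b_j,b_k\rangle=\tfrac14(\|b_j\|^2+\|b_k\|^2-1)$. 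Thus the whole problem becomes a feasibility question for $s+1$ vectors $\mathbf x_j=(x_{ij})_i$ lying in the hyperplane $\mathbf 1^{\perp}\subset\R^{d+1}$, with a prescribed Gram matrix and with each coordinate bounded below by $-\tfrac12 t_j$. Moreover, a direct check shows that $b_j=a_{i_0}$ forces exactly $d$ of the $d+1$ lower bounds for $\mathbf x_j$ to be tight, so ``sharing a vertex'' is detected by a near-maximal set of active ball constraints.

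The heart of the argument should be a variational analysis of this feasibility problem. I would fix the combinatorial type and examine an extremal configuration — for instance one maximizing the number of active constraints $\#\{(i,j):\|a_i-b_j\|=1\}$ subject to $B$ remaining a regular unit simplex — and try to show that at such an extremum the contact pattern is rigid enough to be classified. The target statement is that any family of $s+1$ mutually-unit points contained in the Reuleaux simplex $K=\bigcap_i\bar B(a_i,1)$ is, up to the symmetries of $A$, either degenerate (two points coincide) or a deformation of the matching--midpoint configuration of the previous section; since that configuration saturates at exactly $s$ points, a genuine $(s+1)$-st mutually-unit point cannot be accommodated without collapsing onto a vertex of $A$. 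Kirszbraun's theorem is the natural tool for controlling how a unit configuration may sit inside $K$, and Maehara's sphericity result provides the extremal input about how densely such points can cluster.

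The main obstacle, and the reason this remains only conjectural, is precisely this classification step. Positive-semidefiniteness of the Gram matrix together with the mean-zero condition $\mathbf x_j\in\mathbf 1^{\perp}$ is automatically satisfiable for a regular $s$-simplex of the admissible size, so semidefiniteness alone never produces the contradiction; it can only come from the interaction of the $d+1$ one-sided ball constraints with the exact-unit rigidity of $B$, and these couple the points $b_j$ across all indices $j$ in a way that resists a clean linear-algebraic bound. Controlling the asymmetric, non-extremal placements — where only a few ball constraints are tight and the $b_j$ are spread unevenly inside $K$ — is where a complete proof must do real work, most plausibly via an inductive reduction on $d$ that peels off a vertex of $A$ together with the point of $B$ that it is forced to be close to, thereby lowering both $d$ and $s$ in a controlled way and invoking the lower-dimensional case.
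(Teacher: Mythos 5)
There is a genuine gap here, and it is worth being clear about its nature: the statement you are attempting is left as an open conjecture in the paper. The authors prove nothing about it beyond the lower-bound construction at the end of Section 3 (the $\lfloor\frac{d+1}{2}\rfloor$ midpoints of pairwise disjoint arcs of the Reuleaux simplex, slightly contracted), which is what motivates the conjectured threshold $\lfloor\frac{d+1}{2}\rfloor+1$. So there is no proof in the paper to compare against, and your proposal does not supply one either. Your preliminary reductions are correct and cleanly executed: the tight-frame identity $\sum_i\langle a_i,x\rangle^2=\frac12\|x\|^2$, the bound $\|b_j\|^2\le\frac{d+2}{2(d+1)}$, and the translation into a feasibility problem for vectors $\mathbf x_j\in\mathbf 1^{\perp}$ with prescribed Gram matrix and one-sided coordinate constraints are all right. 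But the step you call the ``heart of the argument'' --- classifying the configurations of $s+1$ mutually-unit points inside the Reuleaux simplex $K$ and showing they must degenerate --- is not an argument; it is a restatement of the conjecture itself. You acknowledge this, but the acknowledgment does not close the gap.

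Two specific points where the sketch would fail if pushed. First, neither Kirszbraun's theorem nor Maehara's result applies to this asymmetric situation: Maehara's theorem (equivalently, Conjecture 3 of Mori\'c--Pach) concerns two simplices on $d$ vertices each, with non-strict inequalities, and the paper itself explains at length in Paragraph 4.1 that even in that balanced case the strict/non-strict distinction is a real obstruction; here the simplices have $d+1$ and $\lfloor\frac{d+1}{2}\rfloor+1$ vertices, which is outside the scope of both tools. Second, the proposed induction (``peel off a vertex of $A$ together with the point of $B$ it is forced to be close to'') has no mechanism: nothing in your constraints forces any $b_j$ to be near a specific $a_i$ unless a constraint is already tight, and deleting one vertex of $A$ leaves $d$ points that do not form a full regular simplex in $\R^{d-1}$ centered where the tight-frame identity would need them; moreover $\lfloor\frac{d+1}{2}\rfloor$ decreases only every other step of $d$, so the parameters do not match the inductive hypothesis. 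If you want to make progress, the most promising concrete sub-case is small $d$ (say $d=4$, $s=2$, i.e.\ a unit $4$-simplex versus a unit triangle), where the Gram-matrix formulation you set up becomes a finite semidefinite feasibility problem that can actually be analyzed by hand.
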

In \cite{KP2} the authors proved Conjecture \ref{gypo11} for $d=4$.

\section{Preliminaries}\label{sec3}

Given a hyperplane $\pi$, we  denote by $\pi^+$ and $\pi^-$ two closed half-spaces (half-spheres in the spherical case) that are determined by $\pi$.

The following object is very important for understanding diameter graphs:

\begin{opr}\label{opr1}
{\emph A} Reuleaux simplex $\Delta$ in $\R^d$  \emph{is a set formed by the intersection of the balls $B_i = B^{d}_1(v_{i})$ of unit radius with centers in $v_{i},\ i=1,\ldots, d+1$, where $v_{i}$'s are the vertices of a unit simplex in $\R^d$. In the case $d=3$ we call this object a \textit{Reuleaux tetrahedron}, and in the case $d=2$ we call it a} Reuleaux triangle.
\end{opr}

 We denote the $(d-1)$-dimensional spheres of unit radii with centers in $v_1,\ldots, v_{d+1}$ (the boundary spheres of $B_1,\ldots, B_{d+1}$) by $S_1,\ldots S_{d+1}$. A Reuleaux simplex is a spherical polytope, so one can naturally partition the boundary of a Reuleaux simplex into spherical faces of different dimensions: the vertices of the underlying simplex are the zero-dimensional faces, the arcs that connect the vertices are the one-dimensional faces and so on. We discuss it in more details a bit later in this section.
 The analogous definition could be given in the case of $S^d_r$, $r>1/\sqrt 2$. In this case we call the body a \textit{spherical Reuleaux simplex}. The only thing one has to keep in mind is that on a $d$-dimensional sphere we still consider spherical Reuleaux simplices on $d+1$ vertices. Note that, by Jung's theorem, on a $d$-dimensional sphere of radius $r = \sqrt{(d+1)/(2d+4)}$ one can have a regular unit $(d+2)$-simplex, which is, however, impossible for other radii (and, in particular, impossible for $r > 1/\sqrt 2$).

 For a given set $W$ we denote its interior by $\inter W$. In the paper we use several times the following simple observation.

 \begin{obs}\label{obs0} Consider two $d$-balls $B,B'$ in $\R^d$ of radii $r,r'$, correspondingly. Denote their boundary spheres by $S, S'$. Assume that $r>r'$ and that $S$ and $S'$ intersect in a $(d-2)$-dimensional sphere. Denote the hyperplane that contains $S\cap S'$ by $\sigma$ and assume that the centers of $S, S'$ are in the same closed halfspace $\sigma^+$ with respect to $\sigma$. The other halfspace we denote by $\sigma^-$. Then $B\cap\sigma^+\supset B'\cap\sigma^+$ and $B\cap\sigma^-\subset B'\cap\sigma^-$. Moreover, $\inter(B\cap\sigma^+)\supset B'\cap\inter(\sigma^+)$ and $B\cap\inter(\sigma^-)\subset \inter(B'\cap\sigma^-).$
 \end{obs}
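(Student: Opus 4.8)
The plan is to reduce all four inclusions to a single one-variable computation after fixing convenient coordinates. First I would place the origin at the center of the $(d-2)$-sphere $S\cap S'$ and take $\sigma=\{x_1=0\}$ with $\sigma^+=\{x_1\ge 0\}$. Since the center of $S$ is at distance $r$ from every point of $S\cap S'$, it must lie on the line through the origin orthogonal to $\sigma$, i.e.\ on the $x_1$-axis; the same holds for the center of $S'$. Writing $\rho$ for the radius of $S\cap S'$, the centers are then $c=(a,0,\dots,0)$ and $c'=(a',0,\dots,0)$ with $r^2=a^2+\rho^2$ and $r'^2=a'^2+\rho^2$. Because both centers lie in $\sigma^+$ we have $a,a'\ge 0$, and the hypothesis $r>r'$ forces $a>a'\ge 0$.

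Next I would compare the two balls through their defining quadratics $f(x)=\|x-c\|^2-r^2$ and $g(x)=\|x-c'\|^2-r'^2$, so that $B=\{f\le 0\}$, $\inter B=\{f<0\}$, and analogously $B'=\{g\le 0\}$, $\inter B'=\{g<0\}$. Expanding and using $r^2-a^2=r'^2-a'^2=\rho^2$, the $\|x\|^2$ terms and the constants cancel, leaving the linear identity
\[
f(x)-g(x)=2(a'-a)\,x_1 .
\]
Since $a>a'$, the coefficient $2(a'-a)$ is negative, so the sign of $f(x)-g(x)$ is governed entirely by the sign of $x_1$, that is, by which side of $\sigma$ the point $x$ lies on.

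From this identity the four inclusions follow immediately. On $\sigma^+$ (where $x_1\ge 0$) we have $f(x)\le g(x)$, so $g(x)\le 0$ implies $f(x)\le 0$, giving $B'\cap\sigma^+\subset B\cap\sigma^+$; and if in addition $x_1>0$ then $f(x)<g(x)\le 0$, so $B'\cap\inter(\sigma^+)$ lands in $\inter B\cap\inter(\sigma^+)\subset\inter(B\cap\sigma^+)$. Symmetrically, on $\sigma^-$ we have $f(x)\ge g(x)$, whence $f(x)\le 0$ implies $g(x)\le 0$, yielding $B\cap\sigma^-\subset B'\cap\sigma^-$; and for $x_1<0$ one gets $g(x)<f(x)\le 0$, so $B\cap\inter(\sigma^-)\subset\inter(B'\cap\sigma^-)$.

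I expect no serious obstacle: the statement is elementary once the coordinates are fixed. The only points requiring a little care are the justification that both centers lie on the common axis of $S\cap S'$ (so that the comparison is genuinely one-dimensional, which is what makes $f-g$ linear), and the passage from the open conditions such as $\{f<0\}\cap\inter(\sigma^+)$ to $\inter(B\cap\sigma^+)$, which holds simply because a finite intersection of open sets contained in $B\cap\sigma^+$ is an open subset of $B\cap\sigma^+$.
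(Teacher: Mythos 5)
Your proof is correct. The paper offers no proof of this observation at all --- it is stated as a ``simple observation'' and merely illustrated by Figure~1 --- so there is no argument of the authors to compare against; your computation supplies the missing verification. The reduction to the linear identity $f(x)-g(x)=2(a'-a)x_1$ (i.e.\ the radical-hyperplane comparison of the two defining quadratics) is the natural way to do this, and you handle the two points that actually need care: that both centers lie on the common axis of $S\cap S'$ (which is what makes $f-g$ linear in $x_1$ alone), and that $r>r'$ together with $a,a'\ge 0$ forces $a>a'$ so the sign of $f-g$ is controlled by the side of $\sigma$. The passage from $\{f<0\}\cap\inter(\sigma^+)$ to $\inter(B\cap\sigma^+)$ via ``an open subset of $B\cap\sigma^+$ is contained in its interior'' is also fine. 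No gaps.
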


 \begin{center}  \includegraphics[width=60mm]{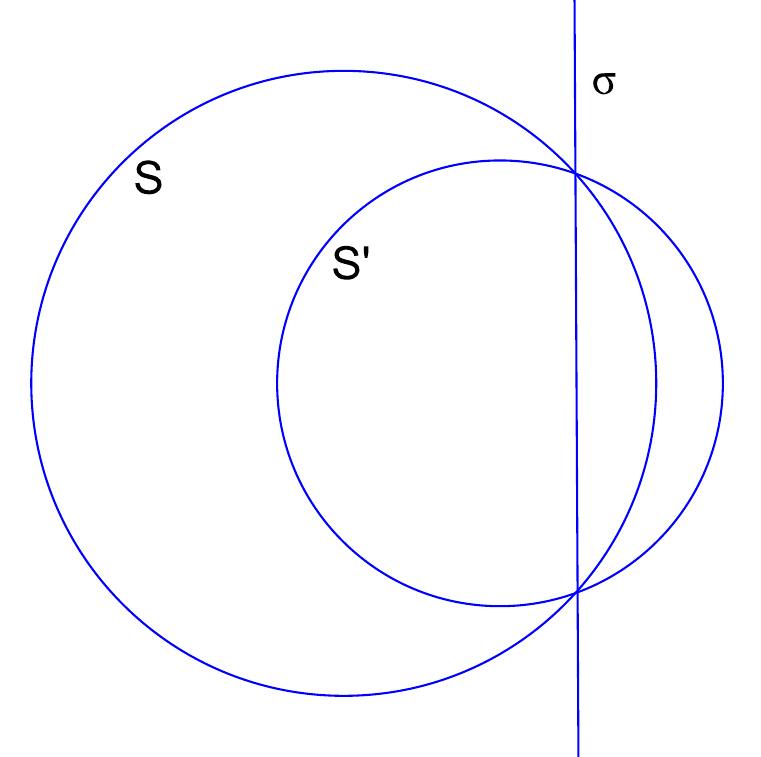}  \end{center}\begin{center}  Figure 1. Ball $B$ contains the part of $B'$ to the left from  $\sigma$. \end{center}

See Fig. 1, illustrating the observation. We apply the observation above to deduce the following two lemmas.
 \begin{lem}\label{lem0} Consider a Reuleaux simplex $\Delta\subset \R^d$ with the set of vertices $v_{i},\ i=1,\ldots, d+1$ and a ball $B$ with a boundary sphere $S$, circumscribed around the $d$-simplex $\{v_1,\ldots, v_{d+1}\}$. Then $\Delta$ lies inside $B$, moreover, $\Delta\cap S = \{v_1,\ldots, v_{d+1}\}.$
 \end{lem}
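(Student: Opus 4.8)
The plan is to compare each defining unit ball $B_i=B^d_1(v_i)$ with the circumball $B$ by means of Observation \ref{obs0}, reducing the whole statement to the pointwise behaviour on the two sides of each facet hyperplane. First I would record the elementary metric data: the circumcenter $c$ of a regular unit simplex is its centroid, and its circumradius equals $R=\sqrt{d/(2(d+1))}<1$, so each $B_i$ is strictly larger than $B$. For a fixed index $i$, the unit sphere $S_i$ passes through every other vertex $v_j$, $j\ne i$, because $\|v_i-v_j\|=1$; since these $d$ points are affinely independent, they span the facet hyperplane $\sigma_i$ opposite $v_i$. The set $S\cap S_i$ is a $(d-2)$-sphere lying in the radical hyperplane and containing these $d$ points, hence that radical hyperplane is exactly $\sigma_i$ and $S\cap S_i$ is the circumsphere of the facet inside $\sigma_i$. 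Evaluating the affine functional that vanishes on $\sigma_i$ shows $v_i$ and $c$ lie strictly on the same side $\sigma_i^+$, so the hypotheses of Observation \ref{obs0} hold with $B_i$ as the larger ball and $B$ as the smaller one.

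With this in place, Observation \ref{obs0} yields $B_i\cap\sigma_i^-\subseteq B\cap\sigma_i^-$ on the far side of the facet, together with the strict interior version $B_i\cap\inter(\sigma_i^-)\subset\inter(B\cap\sigma_i^-)$. To prove $\Delta\subseteq B$ I would argue pointwise: given $p\in\Delta$, if $p\in\sigma_i^-$ for some $i$ then, as $p\in\Delta\subseteq B_i$, we get $p\in B_i\cap\sigma_i^-\subseteq B$. Otherwise $p\in\sigma_i^+$ for all $i$, so $p$ lies in the closed simplex $\mathrm{conv}(v_1,\dots,v_{d+1})=\bigcap_i\sigma_i^+$, which is contained in its circumball $B$. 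In either case $p\in B$.

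For the sharper claim $\Delta\cap S=\{v_1,\dots,v_{d+1}\}$, the inclusion $\supseteq$ is immediate since each $v_i$ lies on $S$ and in every $B_j$. For the reverse inclusion I would take $p\in\Delta\cap S$ with $p\ne v_i$ for all $i$ and derive a contradiction. If $p\in\inter(\sigma_i^-)$ for some $i$, the strict part of Observation \ref{obs0} gives $p\in B_i\cap\inter(\sigma_i^-)\subset\inter(B\cap\sigma_i^-)\subseteq\inter B$, contradicting $p\in S=\partial B$. Otherwise $p$ lies in the closed simplex; writing $p$ as a convex combination of the $v_i\in S$ and invoking the strict convexity of the ball $B$, any nontrivial combination lands in $\inter B$, so $p$ would have to equal a single vertex, again a contradiction.

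I expect the main obstacle to be the bookkeeping that legitimizes Observation \ref{obs0}: confirming that $S\cap S_i$ is a nondegenerate $(d-2)$-sphere, that its radical hyperplane is precisely the facet hyperplane $\sigma_i$, and that both centers lie in the same closed halfspace. It is worth flagging why a cruder approach fails: averaging the constraints $\|p-v_i\|\le 1$ via the identity $\sum_i\|p-v_i\|^2=(d+1)\|p-c\|^2+(d+1)R^2$ only gives $\|p-c\|^2\le 1-R^2=\tfrac{d+2}{2(d+1)}$, which strictly exceeds $R^2$ and so does not place $p$ in $B$. It is exactly the halfspace-by-halfspace comparison of Observation \ref{obs0}, rather than a global average, that makes the bound tight and simultaneously yields the boundary statement.
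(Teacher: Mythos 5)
Your proof is correct and follows essentially the same route as the paper's: both compare each unit ball $B_i$ with the circumscribed ball $B$ across the opposite facet hyperplane via Observation \ref{obs0}, handling the part of $\Delta$ outside the simplex by the strict interior inclusion and the simplex $\bigcap_i \sigma_i^+$ itself by convexity of $B$. The extra verifications you supply (that $S\cap S_i$ is a nondegenerate $(d-2)$-sphere spanning the facet hyperplane, that the centers lie on the same side, and the strict-convexity argument for $\Delta\cap S$) are details the paper leaves implicit.
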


 \begin{proof} Denote by $\pi_i$ the hyperplane that passes through the vertices $v_1,\ldots,v_{i-1},\\ v_{i+1},\ldots, v_{d+1}$. We denote the closed halfspace defined by $\pi_i$ and that contains $v_i$ by $\pi_i^+$. This halfspace contains the convex hull of the vertices  $\{v_1,\ldots, v_{d+1}\}$. By $\pi_i^-$ we denote the other halfspace. Applying  Observation \ref{obs0} to the balls $B_i$, $B$, we get that $\inter B\supset \inter(B\cap \pi_i^-)\supset B_i\cap \inter(\pi_i^-)$. After going through all possible values of $i$, we get that $$\inter B\supset \cup_{i=1}^{d+1}(B_i\setminus\pi_i^+)\supset (\cap_{i=1}^{d+1}B_i)\setminus (\cap_{i=1}^{d+1}\pi_i^+) = \Delta\setminus (\cap_{i=1}^{d+1}\pi_i^+).$$
On the other hand, $\cap_{i=1}^{d+1}\pi_i^+$ is just a convex hull of the points $\{v_1,\ldots, v_{d+1}\}$, and it is for sure contained in $B$, moreover, it intersects $S$ only in its vertices $v_1,\ldots, v_{d+1}$.
 \end{proof}

\begin{lem}\label{lem01} Consider a Reuleaux simplex $\Delta\subset \R^d$ with the set of vertices $v_{i},\ i=1,\ldots, d+1$. Then the intersection of $\Delta$ with the hyperplane $\pi$ that passes through $v_1,\ldots,v_d$ is a Reuleaux simplex $\Delta'$ with vertices $v_i, i =1,\ldots, d$.
\end{lem}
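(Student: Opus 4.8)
The plan is to reduce the statement to Lemma \ref{lem0} applied one dimension lower. First I would pass the cross-section through the intersection: since $\Delta=\bigcap_{i=1}^{d+1}B_i$, we have $\Delta\cap\pi=\bigcap_{i=1}^{d+1}(B_i\cap\pi)$. For $i\le d$ the center $v_i$ lies on $\pi$, so $B_i\cap\pi$ is the unit $(d-1)$-ball in $\pi$ centered at $v_i$; moreover $v_1,\dots,v_d$ still form a regular unit $(d-1)$-simplex inside $\pi\cong\R^{d-1}$, so $\bigcap_{i=1}^{d}(B_i\cap\pi)$ is exactly the Reuleaux simplex $\Delta'$ on the vertices $v_1,\dots,v_d$ in the sense of Definition \ref{opr1}. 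Thus it remains to show that the remaining factor $B_{d+1}\cap\pi$ cuts nothing off, i.e. that $\Delta'\subseteq B_{d+1}\cap\pi$, which yields $\Delta\cap\pi=\Delta'$.

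The key observation is to identify $B_{d+1}\cap\pi$ with the ball circumscribed about $\{v_1,\dots,v_d\}$ within $\pi$. Let $c$ be the orthogonal projection of $v_{d+1}$ onto $\pi$ and $h=\|v_{d+1}-c\|$ the distance from $v_{d+1}$ to $\pi$. Since $\|v_{d+1}-v_i\|=1$ for $i=1,\dots,d$, the Pythagorean relation $\|c-v_i\|^2=1-h^2$ shows that $c$ is equidistant from $v_1,\dots,v_d$; hence $c$ is the circumcenter of the regular unit $(d-1)$-simplex $\{v_1,\dots,v_d\}$ in $\pi$, with circumradius $R:=\sqrt{1-h^2}$. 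A point $x\in\pi$ lies in $B_{d+1}$ precisely when $\|x-c\|^2+h^2\le 1$, that is, when $\|x-c\|\le R$; therefore $B_{d+1}\cap\pi$ is exactly the closed $(d-1)$-ball of radius $R$ centered at $c$, i.e. the circumscribed ball of $\{v_1,\dots,v_d\}$.

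With this identification I would finish by invoking Lemma \ref{lem0} in the $(d-1)$-dimensional space $\pi$: it asserts that the Reuleaux simplex $\Delta'$ is contained in the ball circumscribed about its own vertex set. Since that circumscribed ball is exactly $B_{d+1}\cap\pi$, we obtain $\Delta'\subseteq B_{d+1}\cap\pi$, whence $\Delta\cap\pi=\big(\bigcap_{i=1}^{d}(B_i\cap\pi)\big)\cap(B_{d+1}\cap\pi)=\Delta'$, as desired.

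I expect the only nontrivial point to be the geometric identification in the second paragraph: recognizing that slicing the ``extra'' ball $B_{d+1}$ by the hyperplane through the other $d$ vertices reproduces exactly the circumscribed sphere of the lower-dimensional simplex, thereby making that factor redundant by Lemma \ref{lem0}. Everything else is bookkeeping about intersections commuting with restriction to $\pi$, together with the routine verification that the sliced vertex set remains a regular unit simplex so that $\Delta'$ genuinely qualifies as a Reuleaux simplex.
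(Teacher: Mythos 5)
Your proof is correct and follows essentially the same route as the paper: reduce to showing $\bigcap_{i=1}^{d}(B_i\cap\pi)\subset B_{d+1}\cap\pi$, identify $B_{d+1}\cap\pi$ with the circumscribed ball of $\{v_1,\dots,v_d\}$ inside $\pi$, and invoke the first claim of Lemma \ref{lem0} one dimension lower. The only difference is that you spell out the Pythagorean verification of that identification, which the paper states without proof.
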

\begin{proof}
We have $\Delta' = \pi \cap \bigcap_{i=1}^{d+1} B_i $ and we have to prove that $\Delta' = \pi \cap \bigcap_{i=1}^{d} B_i$. Thus, it is enough to show that $\pi \cap \bigcap_{i=1}^{d} B_i\subset B_{d+1}\cap \pi$. Denote the circumscribed ball of the Reuleaux simplex $\pi \cap \bigcap_{i=1}^{d} B_i$ by $B'$ and its boundary sphere by $S'$. Then $B_{d+1}\cap \pi = B'$, and the statement of the lemma follows from the first claim of Lemma \ref{lem0}.
\end{proof}

 \begin{center}  \includegraphics[width=60mm]{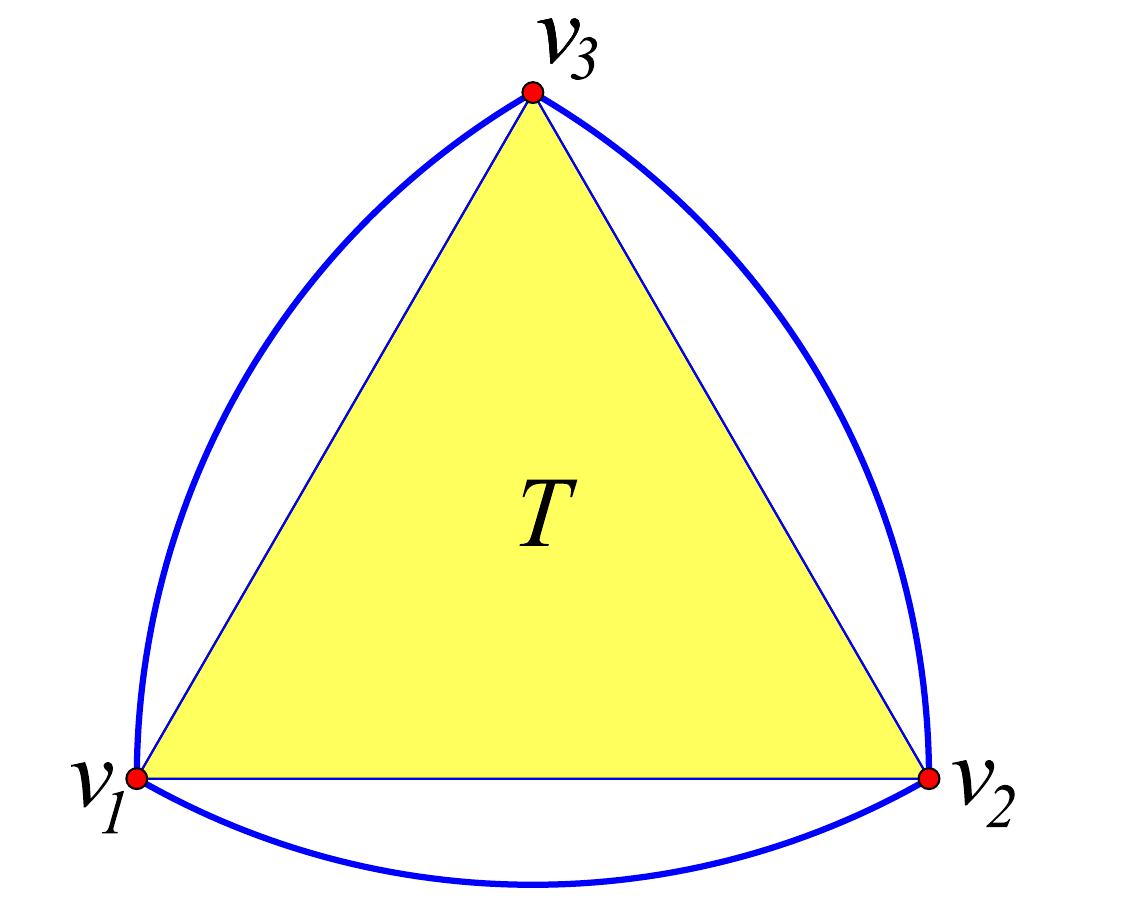}  \end{center}\begin{center}  Figure 2. A simplex, its convex hull $T$ and its Reuleaux simplex $\Delta$. \end{center}

We need some knowledge about the structure of a Reuleaux simplex $\Delta$ as a spherical polytope. Let the vertices of $\Delta$ be $v_1,\ldots, v_{d+1}$ and their convex hull be denoted by $T$ (see Fig. 2, where $T$ is the shaded triangle and $\Delta$ is the set bounded by the arcs). The boundary of the Reuleaux simplex $\Delta$ can be partitioned into relatively open spherical regions of different dimensions in the following way. Consider all spheres $U$ that are formed as intersections of several spheres out of $S_1,\ldots, S_{d+1}$. For example, the intersection of the first $d$ spheres is a two-point set, with one of its points being $v_{d+1}$. We only exclude the intersection of all the spheres, which is empty. We denote the set of all such spheres $U$ by $\mathcal S$. For each point on the boundary of $\Delta$ we may find the sphere from $\mathcal S$ of minimal dimension that contains it. The set of all points from the boundary of $\Delta$ that correspond to a given $U\in \mathcal S$ we call a \textit{face}. It is defined by the set of strict quadratic inequalities and, thus, is naturally a relatively open set.

The center of each sphere $U\in \mathcal S$ coincides with the center of some of the faces of $T$. Namely, if $U = \cap_{j=1}^l S_{i_j}$, where $1\le i_1<i_2<\ldots<i_l\le d+1$, then the center of $U$ is the centerpoint of the $(l-1)$-dimensional face with the vertices $v_{i_1},\ldots, v_{i_l}$. On the other hand, $U$ contains all the vertices from $\{v_1,\ldots,v_{d+1}\}\setminus\{v_{i_1},\ldots, v_{i_l}\}$ and, since $U$ is a $(d-l)$-dimensional sphere and points $\{v_1,\ldots,v_{d+1}\}$ are in general position, these points determine $U$. We call these points the \textit{vertices of the face}.

Each face of $\Delta$ is a connected set, moreover, it is obtained from the face of $T$ with the same set of vertices via projection from the center $O$ of $T$ on the boundary of $\Delta$. We verify this property in what follows. Fix a vertex set of the face $F$ of $\Delta$. W.l.o.g., it is $\{v_1,\ldots, v_i\}$. The points of $F$ on the boundary of $\Delta$ are the ones that, first, are at distance 1 from $v_{i+1},\ldots, v_{d+1}$ and, second, are at distance strictly less than 1 from $v_1,\ldots, v_i$. From the first condition we have $F\subset S_{i+1}\cap\ldots\cap S_{d+1}\subset \alpha$,  where the flat $\alpha$ is an affine hull of the points $O, v_1,\ldots, v_i$ (we may replace $O$ by the centerpoint of the face $\{v_{i+1},\ldots, v_{d+1}\}$ of $T$). Since $Ov_j$ has the same length for all $j$, the second condition is equivalent to the fact that each point $w$ from $F$ satisfies the angular inequality $\angle wOv_j\le \angle wOv_{i+1}$ for all $j=1,\ldots, i$. Each of these inequalities is satisfied in a halfspace, defined by the hyperplane that passes through $O$ and vertices $\{v_1,\ldots, v_{d+1}\}\setminus \{v_j,v_{i+1}\}$. These halfspaces  bound the face $\{v_1,\ldots, v_i\}$ in $T\cap \alpha$ and, as we have showed above, they also bound the face with the same set of vertices in $\Delta\cap\alpha$. Moreover, they pass through $O$, which concludes the proof of the statement about the central projection. We formulate the findings of the last three paragraphs in a lemma:

\begin{lem}\label{lem02} In the notations introduced above, consider a Reuleaux simplex $\Delta\subset \R^d$. Then its boundary may be split into relatively open connected spherical regions (faces), each of which corresponds to an intersection of several spheres out of $S_1,\ldots, S_{d+1}$. The face $F$ of $\Delta$ that corresponds to the intersection of the spheres $S_{i+1},\ldots,S_{d+1}$ lies on the sphere with the center in the centerpoint $C$ of the face $F'$ of $T$ with vertices $v_{i+1},\ldots, v_{d+1}$: $C = (v_{i+1}+\ldots, +v_{d+1})/(d-i+1)$. The flat of minimal dimension that contains $F$ is an affine hull of points $O,v_1,\ldots, v_i$.  Moreover, the face $F$ is equal to the central projection of the convex hull of $\{v_1,\ldots,v_i\}$ from the center of $\Delta$ to the boundary of $\Delta$.
\end{lem}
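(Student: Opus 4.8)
The plan is to recast the three paragraphs preceding the statement as four steps, the engine throughout being the regularity and unit edge length of the simplex $\{v_1,\dots,v_{d+1}\}$. First I would make the face decomposition precise. For $p\in\partial\Delta$ put $J(p)=\{j:\|p-v_j\|=1\}$; since $\inter\Delta=\{q:\|q-v_j\|<1 \text{ for all } j\}$, a point of $\Delta$ is on the boundary exactly when $J(p)\neq\varnothing$, and I group boundary points by their active set. After relabeling so that $J=\{i+1,\dots,d+1\}$, the corresponding face is $F=\{q:\|q-v_j\|=1\ (j>i),\ \|q-v_j\|<1\ (j\le i)\}$. The equalities place $q$ on $U=S_{i+1}\cap\cdots\cap S_{d+1}$ and the strict inequalities make $F$ a relatively open subset of $U$; this is the first assertion.

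Next I would identify $U$ and its affine hull, which is exactly where regularity enters. The equalities $\|q-v_j\|=1$ $(j>i)$ say $q$ is equidistant from $v_{i+1},\dots,v_{d+1}$, so $q$ lies in the flat $M$ cut out by the corresponding perpendicular bisectors; $U$ is the sphere inside $M$ centered at the circumcenter of $\{v_{i+1},\dots,v_{d+1}\}$, and regularity forces this circumcenter to be the centroid $C=(v_{i+1}+\cdots+v_{d+1})/(d-i+1)$. To compute $M$ I would place the vertices at the standard basis points $e_1,\dots,e_{d+1}$ inside the hyperplane $\sum_k x_k=1$; there equidistance from $e_{i+1},\dots,e_{d+1}$ becomes the linear system $x_{i+1}=\cdots=x_{d+1}$, whose solution flat is checked to equal $\operatorname{aff}(O,v_1,\dots,v_i)$ (both $O$ and each $v_k$ with $k\le i$ satisfy it, and these $i+1$ points are affinely independent). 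Since $F$ is a nonempty relatively open piece of the sphere $U$, it spans $\operatorname{aff}U=M$, giving $\operatorname{aff}F=\operatorname{aff}(O,v_1,\dots,v_i)$, the middle assertion.

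The heart of the lemma, and the step I expect to be the main obstacle, is the central-projection description. The starting remark is that, the simplex being unit, $\|v_k-v_j\|=1$ for $k\le i<j$, so the vertices $v_1,\dots,v_i$ lie on $U$; hence the (relatively open) $T$-face $F'$ whose vertex set coincides with that of $F$, namely $\{v_1,\dots,v_i\}$ (matching the phrase ``the face of $T$ with the same set of vertices'' used above), and the region $F$ both sit in $M=\operatorname{aff}(O,v_1,\dots,v_i)$ through the common center $O$. Because all the segments $Ov_j$ have equal length, the law of cosines turns the remaining constraints $\|q-v_k\|<1$ $(k\le i)$ into the angular inequalities $\angle qOv_k\le\angle qOv_{i+1}$; these depend only on the direction of $q$ from $O$, so they describe a cone at $O$, invariant under radial scaling, and it is exactly the cone bounding $F'$. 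Moving outward along a ray from $O$ inside this cone keeps $q$ in $M$, where the common distance to $v_{i+1},\dots,v_{d+1}$ increases monotonically and first reaches $1$ precisely on $U$, i.e.\ on $\partial\Delta$. Thus central projection from $O$ carries $F'$ bijectively onto $F$, which simultaneously yields $F$ as that projection, the nonemptiness of $F$ used above, and the connectedness of $F$ as the continuous image of the convex set $F'$.

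The genuinely delicate points are the two regularity coincidences — circumcenter $=$ centroid and equidistance flat $=\operatorname{aff}(O,v_1,\dots,v_i)$ — and the equivalence of the strict distance inequalities with the angular inequalities cutting out the same cone at $O$. The standard-simplex coordinates dispose of the first two uniformly, and the constant length of the $Ov_j$ is what makes the distance-to-angle dictionary exact; once these are in place, that the central projection from $O$ is well defined on all of $\partial\Delta$ follows from $O\in\inter\Delta$, which holds because $T\subseteq\Delta$ by convexity, and the single-face identity follows by restriction to the cone.
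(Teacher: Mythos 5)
Your proposal is correct and follows essentially the same route as the paper's own argument (the three paragraphs preceding the lemma): decompose $\partial\Delta$ by the set of active unit-distance constraints, identify the equidistance flat with $\operatorname{aff}(O,v_1,\ldots,v_i)$, and use the equal lengths $\|Ov_j\|$ to convert the strict distance inequalities into angular conditions cutting out a cone at $O$ over the $T$-face with vertices $v_1,\ldots,v_i$. Your extra details --- the standard-simplex coordinate check of the flat, the ray-monotonicity argument making the projection an explicit bijection, and the observation that the $F'$ in the final claim must be the face with vertex set $\{v_1,\ldots,v_i\}$ rather than $\{v_{i+1},\ldots,v_{d+1}\}$ --- are all sound and merely make explicit what the paper leaves terse.
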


Next we define the object which is of a particular importance for the paper:

\begin{opr}\label{opr1}
A rugby ball $\Theta$ in $\R^d$  \emph{is a set formed by the intersection of the balls $B_i = B^{d}_1(v_{i})$ of unit radius with centers in $v_{i},\ i=1,\ldots, d$, where $v_{i}$'s are the vertices of a unit $(d-1)$-simplex in $\R^d$.}
\end{opr}
We omit the analogous definition of a \textit{spherical rugby ball}.
Note the difference between a Reuleaux simplex and a rugby ball. The latter is an intersection of $d$ balls instead of $d+1$ for the former. The intersection of the hyperplane $\pi$ that passes through $v_1,\ldots,v_d$ and the corresponding rugby ball is a Reuleaux simplex of codimension 1. The rugby ball is symmetric with respect to $\pi$.

Consider a Reuleaux simplex $\Delta$ on the vertices $v_1,\ldots, v_{d+1}$, the rugby ball $\Theta$ on the vertices $v_1,\ldots, v_d$, and the hyperplane $\pi$ containing vertices $v_1,\ldots,v_d$. Suppose that $v_{d+1}\in \pi^+.$ 

\begin{lem}\label{lem03}
In the notations introduced above, we have $\Delta^+:= \Delta\cap \pi^+ = \Theta\cap\pi^+$.
\end{lem}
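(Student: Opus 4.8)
The plan is to reduce the claimed equality to a single containment and then establish it by an orthogonal projection onto $\pi$. Since $\Delta=\bigcap_{i=1}^{d+1}B_i=\Theta\cap B_{d+1}$, we have $\Delta\cap\pi^+=\Theta\cap\pi^+\cap B_{d+1}$, so the identity $\Delta\cap\pi^+=\Theta\cap\pi^+$ is equivalent to the inclusion $\Theta\cap\pi^+\subseteq B_{d+1}$; in words, every point of the rugby ball lying on the $v_{d+1}$-side of the equatorial hyperplane is within unit distance of $v_{d+1}$. I would fix orthonormal coordinates in which $\pi=\{x_d=0\}$, the centroid of $v_1,\dots,v_d$ is the origin, and $v_{d+1}=(0,\dots,0,h)$ with $h>0$ (by symmetry of the regular $d$-simplex the apex lies on the axis through the circumcenter of $\{v_1,\dots,v_d\}$). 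Writing $v_i=(u_i,0)$ with $\|u_i\|=R$ the circumradius of $\{v_1,\dots,v_d\}$, the unit-edge condition gives the single numerical fact I will use, namely $R^2+h^2=\|v_{d+1}-v_i\|^2=1$.

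Then I would take $p=(y,t)\in\Theta\cap\pi^+$, so $t\ge0$, and let $q=(y,0)$ be its orthogonal projection onto $\pi$. The heart of the argument is two monotonicities. First, projecting to $\pi$ keeps the point inside the rugby ball: for each $i\le d$, $\|q-v_i\|^2=\|y-u_i\|^2\le\|y-u_i\|^2+t^2=\|p-v_i\|^2\le1$, so $q\in\Theta\cap\pi=\Delta'$, the codimension-one Reuleaux simplex of Lemma \ref{lem01}. By Lemma \ref{lem0} applied inside $\pi$, $\Delta'$ lies in the ball of radius $R$ about the origin, hence $\|y\|=\|q\|\le R$. Second, projecting to $\pi$ can only increase the distance to $v_{d+1}$: since $\|p-v_{d+1}\|^2=\|y\|^2+(t-h)^2$ and $\|q-v_{d+1}\|^2=\|y\|^2+h^2$, their difference equals $h^2-(t-h)^2=t(2h-t)\ge0$ for $t\in[0,2h]$ (and every point of $\Theta$ has height at most $h$). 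Combining, $\|p-v_{d+1}\|^2\le\|q-v_{d+1}\|^2=\|y\|^2+h^2\le R^2+h^2=1$, so $p\in B_{d+1}$. As $p$ was arbitrary this proves $\Theta\cap\pi^+\subseteq B_{d+1}$ and hence the lemma.

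The only genuinely clever move is the vertical projection, and I expect identifying it to be the real content. A first instinct is to average the constraints $\|y-u_i\|\le\sqrt{1-t^2}$ over $i$ using $\sum u_i=0$, but this yields only $\|y\|^2\le 1-t^2-R^2$, which is too weak to reach the target $1-(t-h)^2$ for small $t$. The projection bypasses this by transporting the problem to the equatorial slice, where it becomes exactly the already-proved statement that a codimension-one Reuleaux simplex sits inside its circumscribed sphere (Lemma \ref{lem0}); the elementary identity $R^2+h^2=1$ then converts the circumradius bound into the desired unit-distance bound. Everything else is the routine verification that projection respects both membership in $\Theta$ and the monotonicity of the distance to the apex.
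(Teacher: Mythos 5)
Your proof is correct, but it takes a genuinely different route from the paper's. Both arguments begin with the same reduction: since $\Delta\cap\pi^+=\Theta\cap\pi^+\cap B_{d+1}$, it suffices to show $\Theta\cap\pi^+\subseteq B_{d+1}$. The paper then interpolates through the ball $B$ circumscribed about the full $d$-simplex $\{v_1,\ldots,v_{d+1}\}$: it gets $\Theta\cap\pi^+\subset B\cap\pi^+$ by rerunning the argument of Lemma \ref{lem0}, and $B\cap\pi^+\subset B_{d+1}\cap\pi^+$ by a single application of Observation \ref{obs0} (both centers $O$ and $v_{d+1}$ lie in $\pi^+$, and $B_{d+1}$ has the larger radius). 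You instead drop a point $p=(y,t)\in\Theta\cap\pi^+$ orthogonally onto $\pi$, note that the foot $q$ lands in the codimension-one Reuleaux simplex $\Theta\cap\pi$, invoke Lemma \ref{lem0} inside $\pi$ to get $\|y\|\le R$, and close with the Pythagorean identity $R^2+h^2=1$. This is more computational but also more self-contained, trading the two-ball observation for the circumradius bound one dimension down. The only step you assert without proof is that every point of $\Theta$ has height $t\le h$ (needed so that $t(2h-t)\ge0$); this does require justification, but the averaging computation you sketch in your closing paragraph ($\|y\|^2\le 1-t^2-R^2$ over the constraints $\|y-u_i\|^2+t^2\le 1$, hence $t^2\le 1-R^2=h^2$) supplies it, so the gap is cosmetic. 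Net effect: identical reduction, different engine, and both are valid.
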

\begin{proof} Since $\Delta\cap \pi^+ = \Theta\cap\pi^+\cap B_{d+1},$ it is obviously sufficient to show that $B_{d+1}\cap\pi^+\supset \Theta\cap\pi^+.$ Consider a ball $B$ circumscribed around $\Delta$. By using the same argument as in Lemma \ref{lem0}, we get that $B\cap\pi^+\supset \Theta\cap\pi^+$. On the other hand, applying Observation \ref{obs0} to $B, B_{d+1}$, we get that $B_{d+1}\cap\pi^+\supset B\cap\pi^+$.
\end{proof}

Now we describe the construction mentioned in the end of the previous section. Take a regular simplex on $d+1$ vertices in $\R^d$ as the set of red points. Next, construct the Reuleaux simplex on the the red points and choose $l = \lfloor\frac{d+1}2\rfloor$ midpoints $y_1,\ldots, y_l$ of some $l$ pairwise disjoint arcs (1-dimensional faces) that connect the vertices of the Reuleaux simplex. It could be checked that the distance between the midpoints of two such arcs is strictly bigger than 1. To see this, one have to consider a coordinate representation of the simplex $\{v_1,\ldots, v_{d+1}\}$ in the hyperplane $x_1+\ldots,+x_{d+1} = 1$ in $\R^{d+1}$ and calculate the coordinates of such a midpoint.  Thus, if we consider the simplex on $y_1,\ldots, y_l$ and contract it a little, we will get a simplex on vertices $x_1,\ldots, x_l$ with all vertices inside the Reuleaux simplex and with all sides greater than 1. We take $\{x_i\}$ as the set of blue points, which together with the red points gives us the desired example.

\section{Proof}\label{sec4}
\subsection{Reduction to an auxiliary theorem}\label{secp1}

The proof of Theorem \ref{thmain} involves an inductive argument based on the following auxiliary theorem, which is of interest by itself:

\begin{thm}\label{thcom} Given a diameter graph $G$\\
1. In the space $\R^d$, $d\ge 3$;\\
2. On the sphere $S^d_r$ of radius $r>1/\sqrt 2,$ $d\ge 3$,\\
any two $d$-cliques in $G$ must share a vertex.
\end{thm}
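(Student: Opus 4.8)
The plan is to argue by contradiction. A $d$-clique in a diameter graph is exactly a regular unit $(d-1)$-simplex, so two $d$-cliques are two such simplices $\Delta_1=\{a_1,\dots,a_d\}$ and $\Delta_2=\{b_1,\dots,b_d\}$ whose union has diameter $1$; I will assume they share no vertex and produce a pair of points at distance $>1$, contradicting this. The first, soft step is a containment statement: since every $b_j$ lies within distance $1$ of every $a_i$, all of $\Delta_2$ lies in the rugby ball $\Theta_1=\bigcap_{i} B^d_1(a_i)$ of $\Delta_1$, and symmetrically $\Delta_1\subset\Theta_2$. Thus the two simplices are genuinely interlocked, each sitting inside the other's rugby ball.

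The second step makes this quantitative. Writing $O_1$ for the centroid (and circumcenter) of $\Delta_1$, one has $\sum_i\|a_i-O_1\|^2=\tfrac{d-1}{2}$ and hence the identity $\sum_i\|x-a_i\|^2=d\|x-O_1\|^2+\tfrac{d-1}{2}$ for all $x$. Applying it at $x=b_j$ with $\|a_i-b_j\|\le 1$ gives $\|b_j-O_1\|\le h:=\sqrt{(d+1)/(2d)}$, with equality exactly when $b_j$ is a pole of $\Theta_1$ (one of the two points at distance $1$ from all $a_i$). Summing the analogous identity for $\Delta_2$ yields $\|O_1-O_2\|\le 1/\sqrt d$, so the centroids are forced close together. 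Moreover the two poles are at distance $2h>1$, so at most one vertex of $\Delta_2$ can be a pole, and at least $d-1$ of the inequalities $\|b_j-O_1\|<h$ are strict. In effect $\Delta_2$ is trapped in $B^d_h(O_1)\cap\Theta_1$, a thin neighbourhood of the waist of $\Theta_1$.

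Let $\pi=\mathrm{aff}(\Delta_1)$. If $\Delta_2\subset\pi$, then $\Delta_1$ and $\Delta_2$ are two full-dimensional regular unit $(d-1)$-simplices inside a diameter-$1$ subset of $\pi\cong\R^{d-1}$; since such a subset of $\R^{d-1}$ contains at most one $d$-clique (the equality $D_{d-1}(d,n)=1$ from \cite{Sch}), this forces $\Delta_2=\Delta_1$, contradicting disjointness. Hence I may assume $\pi$ splits $\Delta_2$ into nonempty parts in $\inter\pi^+$ and in $\pi^-$. Here I would use the symmetry of $\Theta_1$ about $\pi$ together with Lemma \ref{lem03}: each half $\Theta_1\cap\pi^\pm$ coincides with the corresponding half of the Reuleaux $d$-simplex $R^\pm$ built on $\Delta_1$ and the pole $p^\pm$, whose boundary decomposes into the spherical faces described by Lemma \ref{lem02}. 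Using this decomposition I would localise the off-plane vertices of $\Delta_2$ on the faces of $R^\pm$, and then play the constraints $\|b_j-b_k\|=1$ between the two sides against the waist-trapping of the previous paragraph, either to extract a pair at distance $>1$ or to rotate $\Delta_2$ rigidly onto $\Delta_1$ and again contradict disjointness.

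I expect this interleaving case to be the main obstacle. The difficulty is that all the second-moment inequalities above become simultaneously tight for a degenerate ``touching'' configuration, so the contradiction cannot come from averaged distance bounds alone: it must exploit the fine convex geometry of $\Theta_1$ encoded in Lemmas \ref{lem02} and \ref{lem03}, combined with a reduction in dimension. The natural reduction is to project the configuration from a suitable point onto a sphere, where from $S^d_r$ one lands on a sphere of radius $\sqrt{1-1/(4r^2)}$; this quantity exceeds $1/\sqrt 2$ precisely when $r>1/\sqrt 2$, which is why the Euclidean and spherical statements must be handled together and why $1/\sqrt 2$ is the exact threshold. The spherical case runs in parallel, with chordal distances replacing Euclidean ones and spherical Reuleaux simplices and rugby balls replacing their flat analogues throughout.
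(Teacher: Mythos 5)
Your setup is sound and your architecture matches the paper's: mutual containment of each simplex in the other's rugby ball, the second-moment computations ($\|b_j-O_1\|\le\sqrt{(d+1)/(2d)}$ with equality only at a pole, $\|O_1-O_2\|\le 1/\sqrt d$), and the identification of $1/\sqrt2$ as the exact threshold for the inductive reduction to a lower-dimensional sphere are all correct. But there is a genuine gap precisely where you flag "the main obstacle": the straddling case is never argued, only described as something you \emph{would} do, and the averaged bounds you actually establish cannot carry it. The paper's engine is Lemma \ref{lemimp} --- if $v,w$ lie in the same half $\Theta\cap\pi^+$ and the orthogonal projection of $v$ onto $\pi$ lands in $T=\mathrm{conv}(\Delta_1)$, then $\|v-w\|\le 1$, with equality only in two rigidly described situations --- combined with the dichotomy of Observation \ref{obs01}: a point of $\Theta$ outside the \emph{circumscribed} ball $B$ of radius $\sqrt{(d-1)/(2d)}$ projects (strictly) inside $T$. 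Your containment of $\Delta_2$ in the larger ball of radius $\sqrt{(d+1)/(2d)}$ gives no projection-into-$T$ property and hence no foothold for extracting a distance $>1$. With the correct radius the paper disposes of the case of at least two vertices of $K_2$ on each side of $\pi$ (or all on one side): then all of $K_2$ lies in $B$, and since $K_2$ has the same circumradius as $B$ its circumcenter must coincide with $O$ and its vertices must lie on $\partial B$, whence they project into $T$ and Lemma \ref{lemimp} (plus Lemma \ref{lem0}) forces a shared vertex. This is the job your second-moment step should be doing but does not.

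The remaining case (exactly one vertex $w_1$ of $K_2$ in $\pi^+$, outside $B$) is where your proposal is silent on the decisive point: how to reach the sphere on which the induction runs. You propose to "project the configuration from a suitable point onto a sphere," but such a sphere exists only if some point is at Euclidean distance exactly $1$ from all vertices of one of the simplices, and nothing in your argument produces such a point. The paper manufactures it by a continuous rotation: $v_1$ is rotated about the flat of $v_2,\ldots,v_d$ toward $\pi^-$ until either $\|v'-w_1\|=1$ (then repeat with the next vertex, eventually placing $w_1$ at the pole of the perturbed rugby ball, after which $K_1'$ and $w_2,\ldots,w_d$ all lie on the unit sphere about $w_1$ and induction applies) or some $w_i$ crosses the moving hyperplane $\pi'$ (reducing to the previous case). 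Even granting the idea, one must prove the configuration stays controlled during the rotation --- that $w_1$ remains in $\Theta'\setminus B'$ and that no $w_i$ can leave $\Theta'$ without first hitting $\pi'$ --- which requires the reflection-hyperplane argument with $\gamma$ in the paper. Without this mechanism, or a substitute for it, your "either extract a pair at distance $>1$ or rotate $\Delta_2$ rigidly onto $\Delta_1$" is a restatement of the goal rather than a proof, and Lemmas \ref{lem02} and \ref{lem03} alone will not close it.
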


As we came across the paper \cite{MaeS} we were thinking whether or not to try to give a proof of Theorem \ref{thcom} using Maehara's result. The Euclidean case of Theorem \ref{thcom} is almost equivalent to  \cite[Theorem 2]{MaeS} (which is equivalent to Conjecture \ref{ph2}). However, to get part 1 of Theorem \ref{thcom} from Conjecture \ref{ph2}, one has to replace the non-strict inequality on distances by a strict one. It turns out that this seemingly technical detail is not easy to overcome. If applied directly, \cite[Theorem 2]{MaeS} gives only that, if there are two regular unit $(d-1)$-simplices in a diameter graph in $\R^d$ that do not share a vertex, then there must be at least one edge between them. This is clearly not sufficient (and it is fairly easy to reduce Theorem \ref{thcom} to this case). The proof of Maehara, however, may be modified to give a proof of Theorem \ref{thcom} in the Euclidean case, but becomes significantly more complicated. Maehara's technique relies heavily on linear algebra, thus we do not even know if it is possible to apply his techniques in the spherical case.  We believe that in sum it would not simplify the proof of Theorem \ref{thcom}. Therefore, we decided to leave the proof as it is and not to utilize Maehara's ideas.\\

In this subsection we describe how to derive Theorem \ref{thmain} from Theorem \ref{thcom}.
Consider two $d$-cliques in a diameter graph $G$ in $\R^d$ (or on $S_r^d$ with $r>1/\sqrt 2$). Then, by Theorem \ref{thcom}, these two cliques must share a vertex. All the remaining vertices of the two simplices must lie on the $(d-1)$-dimensional unit sphere $S$ with the center in the common vertex of the two simplices. The vertices on $S$ form two $(d-2)$-dimensional unit simplices, and, since the subgraph that lies on $S$ is a diameter graph, we can again apply Theorem \ref{thcom} and obtain that the two $(d-2)$-dimensional simplices on $S$ must share a vertex, which gives the second common vertex for the $d$-cliques.

Finally, we obtain that any two $d$-cliques must share $d-2$ common vertices and apply a spherical analogue of Theorem \ref{thPh}, which was proved in \cite{KP}. This completes the proof of Schur's conjecture. We only have to verify the following: the spheres that we work with during this process always have radius greater than $1/\sqrt 2$. This was shown to be true in \cite[Lemma 4]{KP}. We state this fairly easy lemma and present its proof for completeness.

\begin{lem}\label{lemrad} Consider a  $d$-dimensional sphere  $S = S^d_r$ of radius $r> 1/\sqrt 2$ and a unit simplex  $\Delta$ on $k$ vertices $v_1,\ldots, v_k$ with all its vertices on $S$. Then the intersection  $\Omega$ of the sphere $S$ and the $k$ unit spheres with centers in $v_1,\ldots, v_k$ is a sphere of radius  $r_{\Omega}> 1/\sqrt 2$.
\end{lem}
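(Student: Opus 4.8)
The plan is to work in the ambient Euclidean space $\R^{d+1}$, placing the center of $S=S^d_r$ at the origin $O$, so that $\|v_i\|=r$ for each $i$ and $\|v_i-v_j\|=1$ for $i\ne j$. The first observation is that intersecting $S$ with a unit sphere centered at $v_i$ is the same as intersecting $S$ with a hyperplane: expanding $\|x-v_i\|^2=1$ for $x\in S$ gives $2r^2-2\langle x,v_i\rangle=1$, i.e. $\langle x,v_i\rangle=r^2-\tfrac12$. Hence each constraint cuts $S$ by the hyperplane $\pi_i$ with normal $v_i$, and $\Omega=S\cap A$, where $A=\bigcap_{i=1}^k\pi_i$ is an affine subspace. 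Consequently $\Omega$ is a sphere whose center $O'$ is the foot of the perpendicular from $O$ to $A$, and whose radius satisfies $r_\Omega^2=r^2-h^2$ with $h=\|O'-O\|$ the distance from $O$ to $A$. The whole lemma thus reduces to computing $h$ and checking $r^2-h^2>\tfrac12$.

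To compute $h$, note that the direction space of $A$ is $(\operatorname{span}\{v_1,\dots,v_k\})^\perp$, so $O'\in\operatorname{span}\{v_1,\dots,v_k\}$. Writing $O'=\sum_j\lambda_j v_j$ and imposing $\langle O',v_i\rangle=r^2-\tfrac12$ for every $i$ yields the linear system $M\lambda=(r^2-\tfrac12)\mathbf 1$, where $M$ is the Gram matrix with $M_{ii}=r^2$ and $M_{ij}=r^2-\tfrac12$ for $i\ne j$; equivalently $M=(r^2-\tfrac12)J+\tfrac12 I$ with $J$ the all-ones matrix. Since its eigenvalues are $k(r^2-\tfrac12)+\tfrac12$ and $\tfrac12$ (the latter with multiplicity $k-1$), $M$ is positive definite, so the $v_i$ are linearly independent and the system has a unique solution; by symmetry $\lambda=\lambda_0\mathbf 1$. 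Solving gives $\lambda_0=\frac{r^2-1/2}{k(r^2-1/2)+1/2}$, and then $h^2=\langle O',O'\rangle=\lambda^{\!\top}M\lambda=(r^2-\tfrac12)\,k\lambda_0=\frac{k(r^2-1/2)^2}{k(r^2-1/2)+1/2}$.

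It remains to substitute. Setting $t=r^2-\tfrac12>0$ (this is where the hypothesis $r>1/\sqrt2$ enters), one finds $r_\Omega^2-\tfrac12=r^2-h^2-\tfrac12=t-\frac{kt^2}{kt+1/2}=\frac{t/2}{kt+1/2}>0$, so $r_\Omega>1/\sqrt2$ exactly as claimed. As a byproduct, $r^2-h^2>\tfrac12>0$ forces $h<r$, which confirms that $A$ genuinely meets $S$ and that $\Omega$ is a nondegenerate sphere rather than a point or the empty set.

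I do not expect a serious obstacle here: once the unit-sphere constraints are recognized as hyperplane sections, everything becomes linear algebra. The only points demanding care are verifying that the Gram matrix is invertible (so that the perpendicular foot, and hence the center of $\Omega$, is well defined), and keeping track of which quantities are squared; the target inequality $r_\Omega^2>\tfrac12$ then falls out of a one-line simplification.
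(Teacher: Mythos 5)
Your proof is correct, and it reaches the stated inequality by a cleanly different route from the paper. The paper also works in the ambient Euclidean space, but it proceeds synthetically: it invokes Jung's theorem to get the circumradius $r'=\sqrt{(k-1)/(2k)}$ of the unit $(k-1)$-simplex, observes that the intersection of the $k$ unit spheres is itself a sphere $S''$ of radius $r''=\sqrt{(k+1)/(2k)}$ centered at the centroid $v$ of the $v_i$, and then writes $r_{\Omega}^2$ in two ways — once from $\Omega\subset S$ and once from $\Omega\subset S''$ — using the decomposition of the segment $Ov$ by the center of $\Omega$. Summing the two expressions makes the unknown position of that center almost cancel, giving $2r_{\Omega}^2=1+2a(b-a)>1$ without ever solving for $a$. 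You instead linearize each unit-sphere condition into the hyperplane $\langle x,v_i\rangle=r^2-\tfrac12$ and locate the center of $\Omega$ explicitly by inverting the Gram matrix $(r^2-\tfrac12)J+\tfrac12 I$. What your version buys is self-containedness (Jung's theorem is replaced by the positive-definiteness of the Gram matrix, which also yields the linear independence of the $v_i$ and the nondegeneracy of $\Omega$ as free byproducts) and an exact closed form $r_{\Omega}^2=\tfrac12+\frac{t/2}{kt+1/2}$ with $t=r^2-\tfrac12$; what the paper's version buys is brevity, since the inequality appears without any matrix computation. Both arguments are complete and the computations in yours check out.
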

\begin{proof} We assume that the sphere is embedded into a Euclidean space, and we work in that space. Denote by $v= \frac 1k\sum_{i=}^k v_i$ the center of the sphere $S'$, circumscribed around $\Delta$. By Jung's theorem,  the radius $r'$ of $S'$ is equal to $\sqrt{\frac{k-1}{2k}}$. So, the radius $r''$ of the sphere $S''$, which is the intersection of  $k$ unit spheres with centers in $v_1,\ldots, v_k$ is $\sqrt{1 - \frac{k-1}{2k}} = \sqrt{\frac{k+1}{2k}}.$ Note that the center of $S''$ is also $v$. Denote the center of $S$    by $O$. Then the center $w$ of $\Omega$ lies on the segment $Ov$ of length $b$. Since $v_1,\ldots, v_k$ lie on $S$, we have
$b^2 = r^2-(r')^2 = r^2-\frac{k-1}{2k}$. Suppose $w$ splits the segment $Ov$ into the parts of length  $b-a,a$ respectively. Then, since $\Omega\subset S$, we get $r_{\Omega}^2 = r^2-(b-a)^2$. We also have $\Omega\subset S''$ so we get $r_{\Omega}^2 = \frac{k+1}{2k}-a^2.$ Therefore,

$$2r_{\Omega}^2  = r^2-b^2+\frac{k+1}{2k}+2ab-2a^2 =\frac{k-1}{2k}+\frac{k+1}{2k}+2a(b-a)> 1,$$
because it is easy to see that $a, b-a>0$.
\end{proof}

\subsection{Sketch of the proof of Theorem \ref{thcom}}
Our main goal is to prove Theorem \ref{thcom}. The proof of this theorem also goes by induction. The base case $d = 3$ is known to be true (it was verified for $\R^3$ in \cite{Dol} and for $S^3_r$ with $r>1/\sqrt 2$ in \cite{Kup10}). We reduce the problem for the $d$-dimensional space or for the $d$-dimensional sphere to the analogous problem for the $(d-1)$-dimensional sphere. Since the base case is already verified, this concludes the proof of the theorem.

To justify the induction step, we proceed as follows. We consider two unit simplices on $d$ vertices $K_1,K_2$ and build a rugby ball $\Theta$ around $K_1$. We analyze the possible positions of the vertices of $K_2$ with respect to $\Theta$, in particular, with respect to the plane $\pi$ that passes through $K_1$.

The first step is to prove Lemma \ref{lemimp}, which, roughly speaking, tells us that if we have two vertices in one of the halfspaces $\pi^+,\pi^-$, one of which projects inside the convex hull $T\subset \pi$ of $K_1$, then $K_1$ and $K_2$ have common vertices. This already reduces a lot the work to be done.

The second step, which is considered in (i) of the next subsection, is to consider the case when none of the vertices of $K_2$ are projected strictly inside $T$. In that case we use the fact that then all these vertices lie inside a full-dimensional ball $B$, circumscribed around $K_1$ and with the center in the center of $K_1$. This is due to the fact, stated in Observation \ref{obs01}, that $\Theta$ is contained in the union of $B$ and the set of points of $\Theta$ that project inside $T$. Once we know that $K_2$ is contained in $B$, we conclude that it is contained in the boundary $S$ of $B$, which leads to a conclusion that all the vertices are actually projected inside $T$. This leads to the same conclusion as in the previous step.

Finally, if none of the two are true, we have just one vertex $w_1$ in $\pi^+$, which projects inside $T$, and the rest in $\pi^-$. This correspond to the case (ii) of the next subsection. We describe a procedure that rotates $K_1$ and creates more and more unit distances between $w_1$ and the vertices of $K_1$. At the end this results in having a full-dimensional regular unit simplex $K_1\cup \{w_1\}$. Then all the vertices of $K_1\cap K_2$ apart from $w_1$ lie on a unit sphere centered at $w_1$ and we can apply induction.

It is tempting to give a unified proof of Theorem \ref{thcom}, in which the Euclidean and the spherical cases are both treated at the same time. But, on the other hand, it makes the proof more difficult to understand. So we chose a intermediate option. We give two separate proofs, first for the Euclidean case and then we describe the differences and peculiarities of the spherical case in a separate subsection. The key lemma (Lemma \ref{lemimp}), however, has a unified proof.

\subsection{Proof of Theorem \ref{thcom}. Euclidean case}\label{secp2}

We begin with the following important lemma:

\begin{lem}\label{lemimp} Take a Reuleaux simplex $\Delta$ in $\R^d$ and the hyperplane $\pi$ containing the vertices $v_1,\ldots,v_d$ of $\Delta$. Consider the body $\Delta^+$. Suppose $v,w\in \Delta^+$, and suppose that the projection $v'$ of $v$ on the hyperplane $\pi$ lies inside the convex hull $T$ of $v_1,\ldots, v_d$. Then $\|v-w\|\le 1,$ with the equality possible in the following two cases: 1. One of the vertices $v, w$  coincides with one of $v_1,\ldots, v_d$. 2. The vertex $w$ lies in the hyperplane $\pi$ on the border of a Reuleaux simplex $\Delta_{\pi}$, constructed on the vertices $v_1,\ldots, v_d$. At the same time the projection $v'$ of the vertex $v$ on the hyperplane $\pi$ must lie on $\partial T$.
\end{lem}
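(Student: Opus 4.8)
The plan is to choose coordinates in which $\pi=\{x_d=0\}$, $\pi^+=\{x_d\ge 0\}$, and the regular unit simplex $v_1,\dots,v_d$ lies in $\pi$ centred at the origin $O$ (with circumradius $\rho=\sqrt{(d-1)/2d}$). Writing $v=(v',h)$ and $w=(w',k)$ for the decomposition into the $\pi$-component and the height, the hypotheses $v,w\in\Delta^+$ become, by Lemma~\ref{lem03}, the conditions $\|v-v_i\|\le 1$ and $\|w-v_i\|\le 1$ for $i=1,\dots,d$ together with $h,k\ge 0$, while the projection assumption reads $v'\in T$. Since $\|v-w\|^2=\|v'-w'\|^2+(h-k)^2$, the whole statement reduces to the single inequality $\|v'-w'\|^2+(h-k)^2\le 1$.

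The first step I would carry out is a reduction: translate both points straight down by $\min(h,k)$. This keeps them in $\pi^+$, does not increase any $\|\cdot-v_i\|$ (so both stay in $\Theta$), leaves $v'$ and hence the condition $v'\in T$ untouched, and preserves $\|v-w\|$; thus I may assume that one of the two points lies in $\pi$. If the low point is $v$ (so $v=v'\in T$ while $w=(w',k)$), the inequality is easy: as $v$ is a convex combination of the $v_i$ and $x\mapsto\|x-w'\|$ is convex, one gets $\|v-w'\|\le\max_i\|v_i-w'\|\le\sqrt{1-k^2}$, which gives $\|v-w\|^2=\|v-w'\|^2+k^2\le 1$ at once. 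So the entire difficulty is concentrated in the opposite branch, where the low point is $w\in\Delta_\pi$ and $v=(v',h)$ sits above $\pi$ with $v'\in T$ and $\|v'-v_i\|\le a:=\sqrt{1-h^2}$ for every $i$.

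In that branch the target becomes $\|v'-w\|\le a$, and since $a\ge\max_i\|v'-v_i\|$ it suffices to prove the purely $(d-1)$-dimensional claim that for $v'\in T$ the farthest point of the Reuleaux simplex $\Delta_\pi$ from $v'$ is one of its vertices, i.e. $\max_{w\in\Delta_\pi}\|v'-w\|=\max_i\|v'-v_i\|$. This is the heart of the matter, and I expect the main obstacle to lie exactly here. The natural attack is a support‑function argument: if the farthest point $w^{*}$ were not a vertex it would lie in the relative interior of a face cut out by the spheres $S_i$, $i\in I$, and the containment $\Delta_\pi\subset \bar B_{\|v'-w^{*}\|}(v')$ forces the direction $w^{*}-v'$ into the normal cone $\mathrm{cone}\{w^{*}-v_i:i\in I\}$; unwinding this expresses $v'$ as an affine combination of $w^{*}$ and a convex combination of the active vertices. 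Since every non-vertex boundary point of $\Delta_\pi$ lies strictly outside $T$ (the flat faces of $T$ meet $\partial\Delta_\pi$ only at the vertices, by Lemma~\ref{lem0}), this should push $v'$ out of $T$, contradicting the hypothesis. Making the contradiction quantitatively tight — and in particular controlling the case where several spheres are active simultaneously — is the delicate point; I would handle it by inducting on the dimension, using the description of the faces of a Reuleaux simplex as central projections of the faces of $T$ (Lemma~\ref{lem02}), so that the restriction of the problem to a face is again an instance of the same claim one dimension down.

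Finally I would read off the equality characterisation from the two branches. In the easy branch strict convexity of $x\mapsto\|x-w'\|$ forces the low point to be an actual vertex, and the membership condition $\|v-v_j\|\le 1$ then forces its height to vanish, giving case~1 for the original points. In the hard branch the bound $a$ is attained at a non-vertex of $\Delta_\pi$ only when $v'$ degenerates onto $\partial T$, in which case the maximiser $w$ sits on $\partial\Delta_\pi$; this is precisely case~2.
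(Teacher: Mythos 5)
Your overall architecture matches the paper's: you reduce to the planar projections exactly as the paper does (your ``translate down by $\min(h,k)$'' step is the same computation as the paper's split into the cases $\|w'-w\|\ge\|v'-v\|$ and $\|w'-w\|<\|v'-v\|$), your easy branch is the paper's first case, and the ``purely $(d-1)$-dimensional claim'' you isolate is precisely the paper's Lemma~\ref{lemrelo}. The problem is that your proof of that claim --- which you correctly identify as the heart of the matter --- does not work. The normal-cone relation you extract from the containment $\Delta_\pi\subset \bar B_{\|v'-w^*\|}(v')$ is only the first-order necessary condition at $w^*$, and that condition is satisfied at non-vertex points of $\partial\Delta_\pi$ with $v'$ strictly inside $T$. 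Concretely, take the Reuleaux triangle on $v_1,v_2,v_3$, let $v'=O$ be the centroid and $w^*=v_1+\sqrt3\,(O-v_1)$ the midpoint of the arc centred at $v_1$. Then $I=\{1\}$, $w^*-O=(\sqrt3-1)(O-v_1)\in\mathrm{cone}\{w^*-v_1\}$, and the affine relation you ``unwind'' reads $O=\tfrac{1}{\sqrt3}w^*+\bigl(1-\tfrac1{\sqrt3}\bigr)v_1$: a convex combination of a point outside $T$ and a vertex of $T$ that lands in the \emph{interior} of $T$. So the step ``this should push $v'$ out of $T$'' is false; a convex combination of an exterior point with a vertex need not leave $T$. (In this example $w^*$ is in fact a local \emph{minimum} of the distance along the arc, which is exactly why first-order conditions cannot distinguish it from the maximiser; the induction over faces you sketch does not repair this, since the failure already occurs with a single active sphere.)

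What is missing is a genuinely global argument certifying that the farthest point is a vertex. The paper supplies it as Lemma~\ref{lemred}: if $\Omega$ is an open region of a sphere $\Upsilon$ centred at $C\in\omega$ with $\Omega\subset\Upsilon\cap\omega^+$, then for any $X\in\omega^+$ with $X\ne C$ and any $Y\in\Omega$ there is $Y'\in\partial\Omega$ with $\|X-Y\|<\|X-Y'\|$ (a two-line rotation argument in the plane through $C,X,Y$). This is applied iteratively through the face structure of $\Delta_\pi$ from Lemma~\ref{lem02}: one projects $T$ onto the flat containing the sphere of the face through $w$ (the centre $C$ being the centroid of the complementary face of $T$, and the projection $v''$ of $v'$ landing in the projection of $T$, hence in $\omega^+$, with $v''=C$ only when $v'\in\partial T$), pushes $w$ to a face of one lower dimension while strictly increasing its distance from $v''$, and repeats until $w$ reaches a vertex. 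You would need this lemma, or some equally global monotonicity statement, in place of the KKT computation; with it, the rest of your write-up (the reduction, the easy branch, and the equality analysis) goes through.
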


We defer the proof of this lemma until Section \ref{sec45}, where we give a unified proof in both Euclidean and spherical cases.\\

Consider a diameter graph $G$ and two $d$-cliques $K_1$, $K_2$ in $G$. Denote by $v_1,\ldots, v_d$ the vertices of $K_1$. Form a rugby ball $\Theta$ on $K_1$ and denote  the hyperplane containing $K_1$ by $\pi$. The following step is essential for the proof. Consider a $d$-dimensional ball $B$, circumscribed around the clique $K_1$. It has a center in the center $O$ of the clique $K_1$ and radius $\sqrt{(d-1)/2d}$, but has one dimension more than a normal circumscribed ball in the hyperplane $\pi$. Denote the boundary sphere of $B$ by $S$. As usually, denote the $(d-1)$-dimensional spheres of unit radii with centers in $v_1,\ldots, v_d$ (the boundary spheres of $B_1,\ldots, B_d$) by $S_1,\ldots S_d$.

The set $S\cap S_i$ for any $i=1,\ldots,d$ is a sphere that lies in the hyperplane $\pi_i$ orthogonal to $\pi$. Indeed, it is true due to the fact that both centers ($O$ and $v_i$) lie in $\pi$. This together with Observation \ref{obs0} gives us the following crucial observation.

 \begin{obs}\label{obs01} In the notations introduced above, whenever a point lies in $\Theta\backslash \inter B$, its projection on the hyperplane $\pi$ falls inside the convex hull $T$ of $v_1,\ldots,v_d$. If a point lies in $\Theta\backslash B$, then its projection falls in $\inter T$.
\end{obs}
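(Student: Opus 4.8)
The plan is to reduce the statement to $d$ one-variable comparisons, one for each index $i$, and to feed each of them into Observation \ref{obs0}. The decisive preliminary remark is that the intersection sphere $S\cap S_i$ passes through all the \emph{other} vertices: for every $j\neq i$ we have $\|v_i-v_j\|=1$ (the edges of the unit clique $K_1$), so $v_j\in S_i$, while $v_j\in S$ because all vertices of $K_1$ lie on the circumscribed sphere. Hence $v_j\in S\cap S_i\subset\pi_i$ for all $j\neq i$. Since the $d-1$ points $\{v_j\}_{j\neq i}$ are affinely independent and span the facet hyperplane $h_i:=\mathrm{aff}(\{v_j\}_{j\neq i})$ of $T$ inside $\pi$, and since $\pi_i\perp\pi$, a dimension count gives $\pi_i\cap\pi=h_i$. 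Thus $\pi_i$ cuts $\pi$ exactly along the facet of $T$ opposite to $v_i$.

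Next I would fix, for each $i$, the unit direction $\hat u_i$ from $O$ to $v_i$; since $\hat u_i\in\pi$, orthogonal projection onto $\pi$ preserves the coordinate $\langle\,\cdot-O,\hat u_i\rangle$, which is the only coordinate that tests membership in $\pi_i^{\pm}$. Consequently, for any point $X$ with projection $X'$, one has $X\in\pi_i^{+}\iff X'$ lies on the $v_i$-side of $h_i$, and likewise for the strict (open) versions. Because $T=\bigcap_{i}\{v_i\text{-side of }h_i\}$ (the barycentric condition $\lambda_i\ge 0$) and its relative interior equals $\bigcap_i\{\text{strict }v_i\text{-side}\}$, the whole statement becomes: $X\in\Theta\setminus\inter B\Rightarrow X\in\pi_i^{+}$ for every $i$, and $X\in\Theta\setminus B\Rightarrow X\in\inter\pi_i^{+}$ for every $i$. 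Here I take $\pi_i^{+}$ to be the halfspace containing both centers $O,v_i$, which one checks do lie on the same side of $\pi_i$ (so that Observation \ref{obs0} applies).

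Finally I would invoke Observation \ref{obs0} with the larger ball $B_i$ (radius $1$) and the smaller ball $B$ (radius $\sqrt{(d-1)/2d}<1$), taking $\sigma=\pi_i$. For part~1, if some $X\in\Theta\setminus\inter B$ violated $X\in\pi_i^{+}$, then $X\in B_i\cap\inter(\pi_i^{-})\subset\inter(B\cap\pi_i^{-})\subset\inter B$, contradicting $X\notin\inter B$. For part~2, if some $X\in\Theta\setminus B$ failed $X\in\inter\pi_i^{+}$, then $X\in B_i\cap\pi_i^{-}\subset B\cap\pi_i^{-}\subset B$, contradicting $X\notin B$. Both contradictions close the argument. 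I expect the only genuine content to be the vertex-containment remark that identifies $\pi_i\cap\pi$ with the facet hyperplane $h_i$; once that is in place the rest is orientation bookkeeping together with a direct quotation of the two inclusions from Observation \ref{obs0}, the main pitfall being to keep straight which of $B,B_i$ is the larger ball and which closed/open halfspace each inclusion refers to.
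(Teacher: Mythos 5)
Your argument is correct and is essentially the paper's own (very terse) justification spelled out in full: the paper likewise observes that $S\cap S_i$ lies in a hyperplane $\pi_i$ orthogonal to $\pi$ because both centers $O$ and $v_i$ lie in $\pi$, and then simply cites Observation \ref{obs0}. Your identification of $\pi_i\cap\pi$ with the facet hyperplane of $T$ opposite $v_i$, the preservation of the $\hat u_i$-coordinate under projection, and the check that $O,v_i$ lie on the same side of $\pi_i$ are exactly the details the paper leaves implicit.
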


Suppose that there are at least two vertices $w_1,w_2$ of $K_2$ in $\pi^+\cap \Theta$. If one of them, say $w_1$, does not lie in $B$, then, by Observation \ref{obs01}, its projection on $\pi$ falls strictly inside $T$, and we are done by Lemma \ref{lemimp}. Indeed, checking the conditions in Lemma \ref{lemimp}  that allow $\|w_1-w_2\|=1$ to hold, one sees that condition 2 does not take place since the projection of $w_1$ falls strictly inside $T$, so the first condition must hold and, consequently, one of the vertices $w_1, w_2$ must coincide with one of the vertices of $K_1$. The same reasoning apply for any two points $w_1,w_2$ of $K_2$ lying in $\pi^-\cap \Theta$.

Now we are left with two possibilities. \\

\textbf{(i)} On both sides of the hyperplane $\pi$ we have at least two points of $K_2$, or all vertices of $K_2$ lie on one side. This case, which seems to be essential, actually has a short resolution. In this case all points from $K_2$ lie inside the ball $B$, and we are able to use some of its properties. Namely, we know that, since $K_2$ is a clique of size $d$, then the radius of the minimal ball that contains the clique equals the radius of $B$ (even though it has a smaller dimension). This means that the center of that minimal ball must coincide with $O$, and all the points of $K_2$ must in fact lie on $S$ (otherwise the minimal ball will have a smaller radius). This, by Observation \ref{obs01}, gives us that all the points of $K_2$ are projected inside $T$. Since there are at least $3$ vertices in $K_2$, then in one of the halfspaces, say, $\pi^+$, there are at least two vertices $w_1,w_2$ of $K_2$. We may then apply Lemma \ref{lemimp}. The next step is to check the two conditions from the lemma that allow the equality $\|w_1-w_2\|=1$ to hold.

Condition 1 gives that one of $w_1,w_2$ coincides with one of the vertices of $K_1$. Condition 2 gives that $w_1$ lies on $\pi$, therefore, $w_1\in\Theta\cap \pi\cap S$. We note that that $\Theta\cap \pi$ is a Reuleaux simplex in the hyperplane $\pi$ and $S\cap \pi$ is its circumscribed sphere. Thus, by Lemma \ref{lem0}, we conclude that  $\Theta\cap \pi\cap S= \{v_1,\ldots, v_d\}.$ Therefore, in any case one of $w_1,w_2$ coincide with some of $v_1,\ldots, v_d$ and we are done in Case (i). \\

\textbf{(ii)} The other possibility is that exactly one vertex, say, $w_1$, lies in $\pi^+$, while the others lie in $\pi^-$. Moreover, we may assume that $w_1\notin B$, otherwise, all the vertices of $K_2$ lie inside $B$ and we argue as in the previous case.

We treat this case as follows. We prove that the two cliques have a common vertex by contradiction. Assuming the contrary, we start with a configuration of two $d$-cliques $K_1,K_2$ without common vertices of the type described above. We perturb the first clique, obtaining a valid configuration of two simplices $K'_1, K_2$ without common vertices, provided that the initial configuration was valid. Thus, if we obtain a contradiction at some point, it means that the initial configuration was as well impossible.

We  try to perturb the simplex $K_1$ so that  $w_1$ will get to the top of the rugby ball $\Theta$, constructed on the perturbed $K_1$, or, in other words, that $w_1$ will form a regular unit simplex with the (possibly perturbed) vertices $v_1,\ldots,v_d.$ Note that we do not modify $K_2$.

Here is the procedure. Suppose the distance between $w_1$ and $v_1$ is strictly less than 1. We rotate $v_1$ around the vertices $v_2,\ldots,v_d$, which are fixed. The possible trajectory of $v_1$ is a circle, and we push $v_1$ towards $\pi^-$. Denote the image of $v_1$ by $v'$. We stop the rotation if one of the following two events happen:\\

\noindent \textbf{Event 1.} The distance between $v'$ and $w_1$ is equal to 1.\\
\textbf{Event 2.} Some of $w_2,\ldots, w_d$ fall on the hyperplane $\pi'$ that passes through $v', v_2,\ldots,v_d$. \\

Before analyzing these two possibilities, we have to make some preparations. We start with the following simple observation.

\begin{obs}\label{obs1} Consider two points $x,y$ in $\R^d$ and a hyperplane $\tau$ that passes through the middle of the segment $xy$ and is orthogonal to it. Denote by $\tau^+$ the closed halfspace bounded by $\tau$ and that contains $x$. Then for any point $z\in \tau^+$ we have $\|z-x\|\le \|z-y\|$.
\end{obs}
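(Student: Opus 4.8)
The plan is to reduce this to the elementary fact that the perpendicular-bisector hyperplane $\tau$ of the segment $xy$ is exactly the locus of points equidistant from $x$ and $y$, and that crossing from one side of $\tau$ to the other is monotone in the difference of squared distances. Since both quantities $\|z-x\|$ and $\|z-y\|$ are nonnegative, it suffices to compare their squares, and the cleanest route is a one-line algebraic computation rather than any geometric case analysis.

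First I would fix notation: let $m=(x+y)/2$ be the midpoint, so that $\tau=\{p:\langle p-m,\,x-y\rangle=0\}$ and the half-space $\tau^+$ containing $x$ is $\{p:\langle p-m,\,x-y\rangle\ge 0\}$. One checks the orientation via $\langle x-m,\,x-y\rangle=\tfrac12\|x-y\|^2\ge 0$, which confirms that $x$ indeed lies in the half-space cut out by the sign $\ge 0$. Then I would expand the difference of squared distances and simplify using $x+y=2m$:
$$\|z-y\|^2-\|z-x\|^2 = 2\langle z,\,x-y\rangle+\|y\|^2-\|x\|^2 = 2\langle z-m,\,x-y\rangle.$$
For $z\in\tau^+$ the right-hand side is nonnegative by the very definition of $\tau^+$, hence $\|z-y\|^2\ge\|z-x\|^2$ and so $\|z-x\|\le\|z-y\|$, which is precisely the claim.

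There is essentially no genuine obstacle here; the only point requiring a moment's care is pinning down the orientation of $\tau^+$, i.e. verifying that the half-space containing $x$ corresponds to $\langle z-m,\,x-y\rangle\ge 0$ and not to the opposite sign — which is exactly what the check $\langle x-m,\,x-y\rangle\ge 0$ settles. A purely geometric alternative, if one wishes to avoid coordinates, is to drop the orthogonal projection $p$ of $z$ onto the line through $x$ and $y$; Pythagoras then gives $\|z-x\|^2=\|z-p\|^2+\|p-x\|^2$ and likewise $\|z-y\|^2=\|z-p\|^2+\|p-y\|^2$, so the inequality reduces to the trivial one-dimensional statement that $p$, lying on the $x$-side of the midpoint $m$, is at least as close to $x$ as to $y$.
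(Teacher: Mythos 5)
Your proof is correct: the identity $\|z-y\|^2-\|z-x\|^2=2\langle z-m,\,x-y\rangle$ together with the orientation check $\langle x-m,\,x-y\rangle=\tfrac12\|x-y\|^2\ge 0$ settles the claim, and the projection-plus-Pythagoras alternative is equally valid. The paper states this observation without proof as an elementary fact, and your argument is exactly the standard verification one would supply, so there is nothing to compare beyond noting that you have filled in the routine details correctly.
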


Next we formulate in a lemma that, while having valid configurations all the time, we eventually arrive at one of the two events above. Define $B'$ as the image of $B$ under the rotation, and similarly define $\Theta'$, $S'$, $S_1'$, and $K_1'$.

\begin{lem}\label{lem2facts} 1. The point $w_1$ stays inside $\Theta'$, moreover, $w_1\notin B'$.\\
2. No vertex among $w_2,\ldots, w_d$ can escape from $\Theta'\cap (\pi')^-$ without falling onto $\pi'$ first.\\
3. No vertex among $w_2,\ldots, w_d$ can coincide with some of the vertices of $K_1'$.
\end{lem}

\begin{proof}[ \textbf{Part 1. }]  The point $w_1$ stays inside $\Theta'$ since we do not move $v_2,\ldots, v_d$ and because of Event 1. In what follows we prove that $w_1\notin B'$.

Consider the hyperplane $\gamma$, which contains the intersection of $S$ and $S'$. It passes through the vertices $v_2,\ldots, v_d$ and through the middle of the segment $vv'$. Denote by $\gamma^+$ the halfspace that contains $v$. We apply Observation \ref{obs1} for the hyperplane $\gamma$ and the centers of balls $B,B'$, which are obviously symmetric with respect to $\gamma$. We get that  $B\cap \gamma^+\supset B'\cap\gamma^+$ and  $B\cap \gamma^-\subset B'\cap\gamma^-$.

Next, we show that $w_1\in \gamma^+$. This is due to the fact that any point from $\pi^+\cap \Theta\backslash B$ projects from above inside the convex hull $T$ of the vertices of the Reuleaux simplex $\Theta\cap\pi$ (see Observation \ref{obs01}). We have $T\subset \gamma^+$. Since  the rotation is made continuously, we may for sure assume that the angular distance between $v$ and $v'$ is less than $90^{\circ}$. Therefore, any point that is projected on $\pi$ inside $T$ from above, lies in $\gamma^+$ (see Fig. 3, where points that are projected inside $T$ from above, lie in the shaded rectangle).
From $\pi^+\cap \Theta\backslash B\subset \gamma^+$ it follows that $ \pi^+\cap \Theta\backslash B\subset \pi^+\cap \Theta\backslash B'$, and, consequently, $w_1\notin B'$.

\begin{center}  \includegraphics[width=60mm]{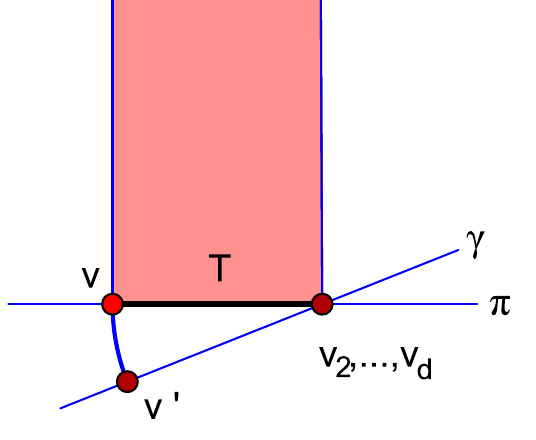}  \end{center}\begin{center}  Figure 3. $\pi^+\cap \Theta\backslash B$ projects inside $T$ and thus is inside $\gamma^+$.\end{center}

\noindent\textit{\textbf{ Part 2. }} This fact is proved in a similar fashion. Consider the hyperplane
that contains the intersection of $S_1$ and $S_1'.$ It is again $\gamma$, moreover, due to Observation \ref{obs1}, we have $\Theta'\cap\gamma^-\supset\Theta\cap\gamma^-$. Indeed, since the spheres $S_2,\ldots,S_d$ do not change, we  only have to look at the intersection of $B_1$ and $B_1'$, and we fall into a situation which is similar to the one considered in the previous part. Therefore, the only way for a point $w_i$ to escape $\Theta'$ is to fall onto $\gamma$ first. But this is not possible, because any position of $\gamma$ was a position of $\pi'$ at the earlier stage of rotation, so the point $w_i$ has to fall onto $\pi'$ first.

\noindent\textit{\textbf{ Part 3. }} Clearly, $w_i$ cannot coincide with $v_2,\ldots, v_d$ since none of them is moved during the rotation. Assume $w_i = v'$. But it means that before the rotation $w_i$ lay on the same arc as $v'$. However,  the projection of this arc on the hyperplane $\pi$ is a straight segment connecting the vertex $v_1$ and the center $O$. Therefore, the projection of $w_i$ onto $\pi$ falls inside $T$, and, since we have more than 1 vertex in $\pi^-$, we have a common vertex between $K_1$ and $K_2$ by Lemma \ref{lemimp}. This contradicts our assumptions.
\end{proof}

The only thing that is left to do is to analyze the two events given by the procedure.  Suppose that Event 2 happens, and the point $w_2$ from $K_2$ falls onto $\pi'$. Then we have two vertices of $K_2$ in $(\pi')^+$, and, since $w_2\ne \{v',v_2,\ldots, v_d\}$, we a common vertex of $K_1'$ and $K_2$ using Lemma \ref{lemimp}, a contradiction.

If Event 1 happens, then we take another vertex of $K_1$ and proceed in the same way. Finally, assume that we cannot continue the procedure and there is no common vertex of $K_1'$ and $K_2$. Denote by $v'_1,\ldots, v'_d$ the images of the vertices of $K_1$ after all the rotations. Then $w_1$ forms a unit $d$-simplex with  $v'_1,\ldots, v'_d$. In this case all the vertices $v'_1,\ldots,v'_d, w_2,\ldots,w_d$ lie on the unit sphere with center in $w_1$.

By induction, a unit $(d-1)$-simplex and a unit $(d-2)$-simplex on a $(d-1)$-dimensional sphere of radius greater than $1/\sqrt 2$ must share a common vertex. Therefore, we get a contradiction in any case and $K_1$, $K_2$ must have a common vertex in the first place.

\subsection{Proof of Theorem \ref{thcom}. Spherical case}\label{secp2}
\subsubsection{Preliminaries on spherical geometry}\label{sec441}
In Section \ref{secp2} we work on a $d$-dimensional sphere $\Gamma$ of radius greater than $1/\sqrt 2$.

Spherical geometry is very similar to Euclidean. To make the proof work in this case, one should, more or less, only change the notation: planes are changed to diametral (or great) spheres, halfspaces to hemispheres, balls to spherical caps. We will often use the Euclidean names for the spherical objects, e.g., say ``a plane'' instead of ``a diametral sphere''. This should not cause confusion, since we will mostly work in terms of internal spherical geometry. However, when it is convenient, we will think of the sphere as a subset of a Euclidean space, and interpret points of $\Gamma$ as vectors. In the next several paragraphs we will list the facts from spherical geometry that we will use in the proof. For an introduction to elementary spherical geometry we refer the reader to Chapter 1 of the book due to L. Fejes T\'oth \cite{LFT}. For a systematic treatment of spherical geometry, that by far covers all the material used in this paper we refer to \cite{Vin}.

\textbf{1.} There is a natural way to assign dimensions to spherical planes, such that the definition will work the same way as in the Euclidean case. Namely, the dimension of a diametral sphere is equal to the dimension of the minimal Euclidean plane that contains it. Note that a spherical line consists of two points.

\textbf{2.} For a flat (diametral sphere) $\gamma$ denote by $\gamma^*$ the maximal flat such that any vector from $\gamma^*$ is orthogonal to any vector in $\gamma$. If $\gamma$ is a hyperplane (diametral sphere of codimension 1), by $\gamma^+,\gamma^-$ we denote the closed half-spaces bounded by $\gamma$.

\textbf{3.} For a given hyperplane $\gamma$ and an arbitrary point $\Gamma\backslash \gamma^*$ we can define the projection of a point $v$ to $\gamma$. Consider the two-point set $\gamma^*$.  Then the projection $v'$ of $v$ on $\gamma$ is the closest intersection point to $v$ of the great circle that goes through $\gamma^*$and $v$ with the plane $\gamma$.

\textbf{4.} We define the reflection $R_{\gamma}$ with respect to a given  hyperplane $\gamma$. For any given point $v$ in $\Gamma\backslash \gamma^*$ we consider the great circle that contains $\gamma^*$ and $v$, and find a point $R_{\gamma}(v)$ on that circle, which is symmetric to $v$ with respect to the projection of $v$ on $\gamma$. As for the $\gamma^*$, the reflection interchanges the two points in $\gamma^*$.

\textbf{5.} Using reflections, it is easy to introduce the notion of orthogonality to a hyperplane, which would be convenient for us. Namely, a plane $\sigma$ is orthogonal to a hyperplane $\gamma$, if $R_{\gamma}(\sigma) = \sigma$.

\textbf{6.} Suppose that we have a $k$-sphere $\Omega$ on $\Gamma$, which is not diametral. It is easy to show that any such sphere is contained in a (spherical) plane $\gamma$ of dimension $k+2$. Indeed, taking a Euclidean point of view, for any $k$-sphere there is a $(k+2)$-dimensional plane that passes through the center of $\Gamma$ and contains $\Omega$. Its intersection with $\Gamma$ is the desired (spherical) plane. Note that this is the minimal plane that contains $\Omega$.

\textbf{7.} For points in an open hemisphere $\Gamma^+$ of $\Gamma$ one can easily define the distance between two points as the shorter angle between the corresponding vectors. In particular, the distance between any point in $\gamma$ and any point in $\gamma^*$ is $\pi/2$. We denote the spherical distance between $u_1,u_2\in \Gamma^+$ by $\rho(u_1,u_2)$.

\textbf{8.} We define an angle between the two intersecting arcs as the dihedral angle between the corresponding vector planes. For three distinct points $u_1,u_2,u_3 \in \Gamma^+$ we denote by $A(u_1,u_2,u_3)$ the angle between the arcs $u_1u_2$ and $u_2u_3$.

\textbf{9.} There is a version of Pythagoras' theorem for spherical triangles. Namely, given a right spherical triangle $u_1,u_2,u_3$ in $\Gamma^+$ with $A(u_1,u_2,u_3)=\pi/2,$ one have $\cos(\rho(u_1,u_3))=\cos(\rho(u_1,u_2))\cos(\rho(u_2,u_3)).$ Moreover, Pythagoras' theorem is a corollary of the spherical cosine law:
\begin{multline}\label{cos} \cos(\rho(u_1,u_3))=\cos(\rho(u_1,u_2))\cos(\rho(u_2,u_3))+\\ +\sin(\rho(u_1,u_2))\sin(\rho(u_2,u_3))\cos(A(u_1,u_2,u_3)).\end{multline} One can deduce the following statement out (\ref{cos}): suppose we are given three distinct points $u_1,u_2,u_3 \in \Gamma^+$ and the angle $A(u_1,u_2,u_3)$ between the arcs $u_1u_2$ and $u_2u_3$ is at least $\pi/2$. Assume moreover, that $\rho(u_1,u_2),\rho(u_3,u_2)$ are less than $\pi/2$. Then $\rho(u_1,u_3)>\max\{\rho(u_1,u_2),\rho(u_2,u_3)\}$.

\textbf{10.} We need the notion of a convex hull of a set of points $\{u_1,\ldots,u_k\}$. The straightforward way to define it is by using the Euclidean interpretation. It is simply the intersection of the cone formed by vectors corresponding to $u_1,\ldots,u_k$ and $\Gamma$. Note that the boundary of such a convex hull is formed by planes (diametral spheres).

\subsubsection{The proof}The distance in $\Gamma$, that corresponds to Euclidean distance 1, we denote by $\phi$. Note that, since in our case $\Gamma$ has radius greater than $1/\sqrt 2$, we have $\phi<\pi/2$.
Suppose we are given a diameter graph on $\Gamma$, which contains two simplices $K_1$ and $K_2$ on $d$ vertices. We consider the spherical rugby ball $\Theta$, formed by vertices of $K_1$, and the diametral sphere $\pi$ that contains $K_1$.

The proof stays almost the same as in the Euclidean case. We describe all the differences in what follows. All the notations are translated to this case from the Euclidean case.

First, we show that the spherical rugby ball $\Theta$ is contained in one of the open hemispheres of $\Gamma$. We denote such a hemisphere by $\Gamma^+$. Consider the unit ball $B_1$ with the center in $v_1$ (one of the vertices of $K_1$) . On one hand, since the radius of $\Gamma$ is bigger than $1/\sqrt2$, $B_1$ is contained in the open hemisphere $\Gamma^+$ with the center in $v_1$. On the other hand, surely, $B_1\supset \Theta$.

Consider a spherical Reuleaux simplex $\Delta$ and a point $x$ inside the (spherical) convex hull of its vertices. Then an open halfsphere with the center in $x$ contains $\Delta$. This is due to the fact that $\Delta$ is contained in the intersection of open halfspheres with centers in the vertices of $\Delta$, therefore, if we think about the vector interpretation of the situation, any vector $y$ from $\Delta$ has positive scalar products with any of the vectors representing the vertices. Since $x$ is a convex combination of the vectors representing the vertices, $x$ and $y$ have positive scalar product. We formulate it as the first part of the following observation:

\begin{obs}\label{obs2} 1. Consider a spherical Reuleaux simplex $\Delta$ on $\Gamma$ and a point $x$ that lies in the spherical convex hull of the vertices of $\Delta$. Then for any $y\in \Delta$ we have $\rho(x,y)<\pi/2$.\\
2. Consider any diametral hypersphere $\gamma$ on $\Gamma$ and any point $x$, $x\notin \gamma^*$. Let $x'$ be the projection of $x$ to $\gamma$. Then $\rho(x,x')<\pi/2$.
\end{obs}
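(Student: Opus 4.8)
The plan is to pass entirely to the Euclidean (vector) interpretation of $\Gamma$, in which a point of the sphere is identified with the vector from the center $O$, and to use the elementary fact that two points $u_1,u_2$ of $\Gamma$ satisfy $\rho(u_1,u_2)<\pi/2$ if and only if the vectors representing them have strictly positive scalar product. Both parts then reduce to verifying a strict scalar-product inequality, and this is exactly where the hypothesis $r>1/\sqrt 2$ enters.

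For part 1, I would first record the key consequence of the radius bound. Taking $O$ as the origin, every vertex $v_i$ has $\|v_i\|=r$, and a point $y\in\Gamma$ lies in the unit ball $B_i$ around $v_i$ precisely when $\|y-v_i\|^2\le 1$, i.e. when $\langle y,v_i\rangle\ge r^2-\tfrac12$. Since $r>1/\sqrt 2$ this lower bound is strictly positive, so $\langle y,v_i\rangle>0$ for every $y\in B_i$, and in particular for every $y\in\Delta\subset\bigcap_i B_i$. Now I would invoke the description of the spherical convex hull (item 10): the vector $x$ lies on the ray $\sum_i\lambda_i v_i$ with all $\lambda_i\ge 0$ and not all zero, so $x=c\sum_i\lambda_i v_i$ for some $c>0$. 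Hence $\langle x,y\rangle=c\sum_i\lambda_i\langle v_i,y\rangle>0$, which gives $\rho(x,y)<\pi/2$ and closes the first part. This is precisely the argument already sketched in the paragraph preceding the observation.

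For part 2, which is not yet addressed in the text, I would argue more directly. The set $\gamma^*$ is a spherical line, i.e. a pair of antipodal points $\pm n$ where $n$ is a unit normal to the hyperplane underlying $\gamma$, and $\gamma$ itself is $\{v\in\Gamma:\langle v,n\rangle=0\}$. For $x\notin\gamma^*$ I would decompose $x=\langle x,n\rangle\,n+x_\perp$ with $x_\perp\ne 0$ (this nonvanishing is exactly the condition $x\notin\gamma^*$). The great circle through $x$ and $\gamma^*$ lies in the plane spanned by $n$ and $x$, and it meets $\gamma$ in the two antipodal points $\pm x_\perp/\|x_\perp\|$; by definition the projection $x'$ is the nearer of these, namely $x'=x_\perp/\|x_\perp\|$, for which $\langle x,x'\rangle=\|x_\perp\|>0$, so $\rho(x,x')<\pi/2$.

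The computations are all routine, so I do not anticipate a genuine obstacle. The only point that demands care is the bookkeeping of translating the spherical notions — the convex hull, the two-point set $\gamma^*$, and ``the closest intersection point'' in the definition of the projection — into vector language, and then checking in each case that it is the strictness of $r>1/\sqrt 2$ (for part 1) and of $x_\perp\ne 0$ (for part 2) that produces the strict inequality $\rho(\cdot,\cdot)<\pi/2$ rather than merely $\le\pi/2$.
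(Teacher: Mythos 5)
Your proof of part 1 is exactly the paper's own argument (the paper sketches it in the paragraph preceding the observation: $r>1/\sqrt 2$ forces each unit ball $B_i$ into the open hemisphere centered at $v_i$, hence $\langle y,v_i\rangle>0$ for $y\in\Delta$, and positivity survives the nonnegative combination defining $x$), and your explicit computation $\langle y,v_i\rangle\ge r^2-\tfrac12>0$ just makes the hemisphere claim quantitative. Part 2 is stated in the paper without proof, and your decomposition $x=\langle x,n\rangle n+x_\perp$ with $x_\perp\ne 0$ correctly identifies the nearer intersection point and yields $\langle x,x'\rangle=\|x_\perp\|\cdot\|x'\|/ \|x_\perp\|>0$ up to the harmless normalization of points of $\Gamma$ to length $r$ rather than $1$, so the proposal is correct and in line with the paper.
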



The proof of Theorem \ref{thcom} starts with Lemma \ref{lemimp}. Its proof is deferred till the next subsection, and for now we assume that it holds in the spherical case as well. In what follows, we go through the changes in the remaining part of the proof.

\begin{obs}\label{obs3}
Spheres $S\cap S_i$ lie in the hyperplane $\pi_i$, which is orthogonal to $\pi$.
\end{obs}
\begin{proof} The first thing we note is that $S\cap S_i$ is a $(d-2)$-sphere that is not diametral. It would be diametral only if both $S$ and $S_i$ are diametral, which is not the case. Thus, by point 6 from the previous subsection, the minimal plane that contains $S\cap S_i$ is of codimension 1. Next, note that $R_{\pi}(S\cap S_i)=S\cap S_i.$ This is due to the fact that both centers of $S$ and $S_i$ lie in $\pi$. Hence, the same should hold for $\pi_i$, and by point 5 from the previous subsection we obtain the desired orthogonality.
\end{proof}

To conclude the proof in the case when there are either none or at least two vertices in each halfspace $\pi^+,\pi^-$ (corresponding to case (i) of the Euclidean proof), we use the same proof as in the Euclidean case. The ingredients that we add are the spherical version of Lemma \ref{lemimp}, Observation \ref{obs3} and the following observation, which shows that  the circumscribed ball considerations still work in the spherical case.

\begin{obs} Suppose we are given two unit simplices $K_1$, $K_2$ on $d$ vertices. Suppose $K_2$ lies inside the $d$-dimensional ball $B$ of diameter $f$, which is a ball of minimal diameter that contains $K_1$. Denote by $B_2$ the circumscribed ball for $K_2$. Then, if $B$ and $B_2$ do not coincide, the intersection $B\cap B_2$ is contained in a ball of diameter strictly smaller than $f$.
\end{obs}

\begin{proof} Choose an arbitrary point $u$ on the open segment connecting the centers $O, O_2$ of $B,B_2$, respectively. By the last property in point 9 from the previous subsection, combined with the fact that $\rho(x,u),\rho(u,O),\rho(u,O_2)<\pi/2$ for any $x \in B\cap B_2,$ we have $\rho(x,u)<\max\{\rho(x,O), \rho(x,O_2)\},$ since $A(O,u,x)$ or $A(O_2,u,x)$ is at least $\pi/2$.   We obtain that $K_2$ is contained in a ball of radius strictly smaller than $f$, which is impossible. Thus, the centers of $B$ and $B_2$ coincide.
\end{proof}
We are left to modify Case (ii) of the Euclidean proof, in which we have one vertex of $K_2$ in $\pi^+\cap\Theta,$ while the rest lie in $\pi^-\cap\Theta$.

Lemma \ref{lem2facts} works exactly as in the Euclidean case. To prove it, we note that the plane $\gamma$ satisfies the following equation: $R_{\gamma}(B') = B$, which is why $B\cap\gamma^+\supset B'\cap\gamma^+$. Indeed, $B'\cap\gamma^+ = R_{\gamma}(B\cap\gamma^-)$ and $B\cap\gamma^-$ is less than a halfball, while $B\cap\gamma^+$ is bigger than a halfball. Similar reasoning applies for the inclusion $\Theta'\cap\gamma^-\supset \Theta \cap\gamma^-$.

As for the third part of Lemma \ref{lem2facts}, consider the case when $w_2$ falls into $\pi'$ and, moreover, $w_2$ coincides with $v'$. We need to check that the arc on which $v', v_1$ lie projects inside the spherical convex hull $T$ of vertices $v_1,\ldots,v_d$. It is clear that the circle $S_2\cap\ldots\cap S_d$ and the sphere $S$ touch at $v_1$ (in the plane $\pi$). Thus  the circle lies in the exterior of the ball $B$ and the point $w_2$ lies in $\Theta\backslash B$. This by Observation \ref{obs01} gives that the arc projects inside $T$. Therefore, we can apply the spherical analogue of Lemma \ref{lemimp} to conclude the proof of Lemma \ref{lem2facts} in the spherical case.

The analysis of the possibilities work the same, and the proof of Theorem \ref{thcom} in the spherical case is complete.

\subsection{Proof of Lemma \ref{lemimp}}\label{sec45}
We give a unified proof of Lemma \ref{lemimp} in both Euclidean and spherical cases. We  use a unified terminology, in particular, $\rho(x,y)$ for both Euclidean and spherical distance and hyperplanes for both hyperplanes and hyperspheres. The distance between points forming edges in a diameter graph, as in the spherical case, we denote by $\phi$.\\

It is enough to consider the case when none of $v,w$  coincide with the vertices of $\Delta$.
Consider the projections $v', w'$ of $v,w$ on the hyperplane $\pi$ (see Fig. 4). We have two possibilities:\\

\begin{figure}
\tdplotsetmaincoords{70}{10}
\begin{tikzpicture}
[tdplot_main_coords, scale=4,
cube/.style={very thick,black},
grid/.style={very thin,gray},
axis/.style={->,blue,thick},
plane_pnt/.style={color=black!30!red,fill=black!30!red},
pnt/.style={color=black,fill=black}
]

\coordinate (A1) at (0,0,0);
\coordinate (A2) at (1,0,0);
\coordinate (A3) at (0.5,{sqrt(3)/2},0);
\coordinate (A4) at (0.5,{sqrt(1/12)},{sqrt(2/3)});

\coordinate (B12) at ($(A1)!0.5!(A2)$);
\coordinate (B23) at ($(A2)!0.5!(A3)$);
\coordinate (B13) at ($(A1)!0.5!(A3)$);
\coordinate (B14) at ($(A1)!0.5!(A4)$);
\coordinate (B24) at ($(A2)!0.5!(A4)$);
\coordinate (B34) at ($(A3)!0.5!(A4)$);

\coordinate (Pl)  at (1.3,0.8,0);

\coordinate (V)  at (0.2,0.2,0.3);
\coordinate (V') at (0.2,0.2,0);

\coordinate (W)  at (0.7,0.6,0.2);
\coordinate (W') at (0.7,0.6,0);

\def\margin{0.6}
\def\topx{1+\margin};
\def\botx{-\margin};
\def\topy{{sqrt(3)/2+\margin*3/4}};
\def\boty{-\margin};

\draw[thin,dashed,red] (\botx,\boty,0) -- (\topx,\boty,0);
\draw[thin,dashed,red] (\topx,\boty,0) -- (\topx,\topy,0);
\draw[thin,dashed,red] (\topx,\topy,0) -- (\botx,\topy,0);
\draw[thin,dashed,red] (\botx,\topy,0) -- (\botx,\boty,0);

\draw[red] (A1) -- (A2);
\draw[red] (A2) -- (A3);
\draw[red] (A3) -- (A1);

\draw[thin,dashed,red] (V') -- (W');
\draw[thin,dashed] (V') -- (V);
\draw[thin,dashed] (W') -- (W);

\draw[red,rotate=0] (A3) arc (180-60:180:1);
\draw[red,rotate=0] (A2) arc (0:60:1);
\draw[red,rotate=0] (A1) arc (-90-30:-90+30:1);

\pgfmathsetmacro\arcradius{sqrt(3)/2}
\pgfmathsetmacro\arcstart{asin(1/3)}

\tdplotsetthetaplanecoords{90}
\tdplotdrawarc[tdplot_rotated_coords]{(B12)}{\arcradius}{\arcstart}{90}{}{}
\tdplotsetthetaplanecoords{210}
\tdplotdrawarc[tdplot_rotated_coords]{(B23)}{\arcradius}{\arcstart}{90}{}{}
\tdplotsetthetaplanecoords{330}
\tdplotdrawarc[tdplot_rotated_coords]{(B13)}{\arcradius}{\arcstart}{90}{}{}

\RightAngle[2pt]{(V)}{(V')}{(W')};
\RightAngle[2pt]{(W)}{(W')}{(V')};

\def\pntsize{0.04};
\draw[plane_pnt] (A1) circle (\pntsize em);
\draw[plane_pnt] (A2) circle (\pntsize em);
\draw[plane_pnt] (A3) circle (\pntsize em);


\draw[plane_pnt] (V') circle (\pntsize em)	node[right] { $v'$ };
\draw[pnt] (V) circle (\pntsize em)	node[right] { $v$ };

\draw[plane_pnt] (W') circle (\pntsize em)	node[below left] { $w'$ };
\draw[pnt] (W) circle (\pntsize em)	node[left] { $w$ };
\draw[pnt] (A4) circle (\pntsize em);

\draw[plane_pnt] (Pl) 	node[below left] { $\pi$ };

\end{tikzpicture}
Figure 4
\end{figure}

1. $\rho(w',w)\ge \rho(v',v)$. Since $v'$ lies inside $T$ and the maximum of the distances from a fixed point to the points of a polytope is attained on the vertices of the polytope, there exists a vertex of $T$, say, $v_1$, such that $\rho(v_1,w')>\rho(v',w').$ Using Pythagoras theorem (in the spherical case, point 9 from Section \ref{sec441} and Observation \ref{obs2}), we get that $\phi\ge\rho(v_1, w)>\rho(v',w).$

 To complete the proof of Lemma \ref{lemimp} in this case we have to show that $\rho(v',w)\ge\rho(v,w)$. In the Euclidean case it follows right away from the fact that $v$ is closer to $\pi$ than $w$. In the spherical case we also have the same property, since $\pi/2>\rho(w',w)\ge \rho(v',v)$. It implies the inequality $A(w,v,v')\ge\pi/2$, and by point 9 from Section \ref{sec441} we get that $\rho(w,v')\ge\max\{\rho(w,v),\rho(v,v')\}$.\\

2. $\rho(w',w)< \rho(v',v)$. Assume for a moment that we obtained an inequality similar to the one in the previous case: there exists a vertex of $T$, say, $v_1$, such that $\rho(v_1,v')\ge \rho(v',w').$ Note that the inequality is not strict in this case. Then, arguing in a similar way, we first get that $$\phi\ge\rho(v_1, v)\overset{(1)}{\ge}\rho(v,w')\overset{(2)}{\ge}\rho(v,w).$$

Unfortunately, we do not have any strict inequality in this chain. However, it is clear that the inequality $(2)$ is strict unless $w = w'$. In the Euclidean case it is obvious, and in the spherical case it again follows from point 9 of Section \ref{sec441}. Moreover, it is easy to show that the inequality $(1)$ is strict unless $w$ is on the border of $\Delta_{\pi}$. Indeed, if $w$ is in $\Delta_{\pi}$, but not on the border, then change $w$ to the point of intersection of the ray $v'w$ with the border of $\Delta_{\pi}$. The distance between $v$ and $w$ will increase, which makes the inequality strict. This proves that $\rho(v,w)=\phi$ may only occur if the first part of condition 2 from the lemma is satisfied.

Therefore, to conclude the proof of Lemma \ref{lemimp} it is sufficient to show that such a vertex $v_1$ exists (and to verify that in the case $\rho(v,w)=\phi$ the other part of condition 2 from the lemma  holds as well). We formulate it as a separate lemma:

\begin{lem}\label{lemrelo} Consider a unit Reuleaux simplex $\Delta_{\pi}$ in $\R^{d-1}$ (or $S^{d-1}_r,$ $r>1/\sqrt 2$) with the set of vertices $v_1,\ldots, v_d$ and two points $v', w\in \Delta_{\pi}, $ different from the vertices of $\Delta_{\pi}$, so that $v'$ lies in the convex hull $T$ of $v_1,\ldots, v_d.$ Denote by $w'$ the projection of $w$ on $\pi$. Then there exists $i$ so that $\rho(v_i,v')\ge \rho(w',v')$, with the equality possible only in case if $v'$ lies on the boundary of $T$.
\end{lem}

Lemma \ref{lemrelo} is proved via a repetitive application of the proposition below.

\begin{prop}\label{lemred}
 Consider a closed half-space $\omega^+$ in $\R^d$ or $S^d$ bounded by a hyperplane $\omega$. Let  $\Upsilon$ be a sphere with center in $C$, where $C\in \omega$. Let  $\Omega$ be an open region on $\Upsilon$, $\Omega\subset \Upsilon \cap \omega^+.$ Consider two points
 $X\in \omega^+,$ $Y\in\Omega$. Then one can find a point $Y'\in \partial \Omega$ such that
\begin{itemize}
\item $\rho(X,Y)<\rho(X,Y')$, if $X\ne C$;
\item $\rho(X,Y)=\rho(X,Y')$, if $X=C$.
\end{itemize}
\end{prop}
\begin{center}  \includegraphics[width=80mm]{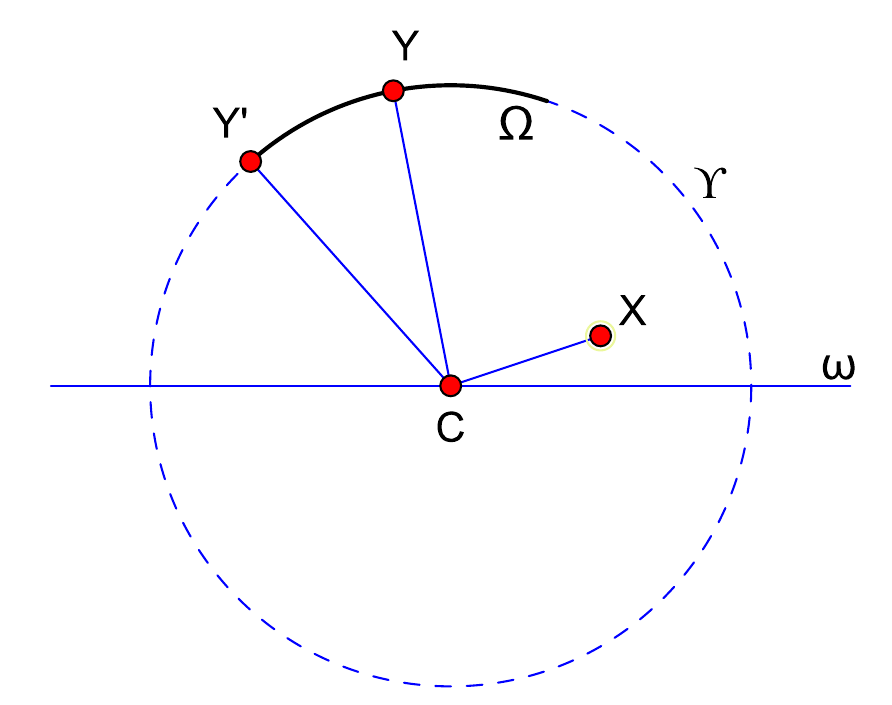}  \end{center}\begin{center}  Figure 5. $Y'$ is farther away from $X$ than $Y$. \end{center}

\begin{proof} The equality from the statement of the lemma is obvious since  $\Omega$ and $ \partial\Omega$ both lie on $\Upsilon$. As for the inequality, consider the two-dimensional plane $\gamma$ that contains the points $C,X,Y$ (See Fig. 5). The line $CY$ splits the plane into two closed halfplanes  $\gamma^+, \gamma^-$. Let $X\in \gamma^+$. There is at least one point $Y'\in \partial \Omega$ in $\gamma^-$, which is different from $Y$. Then we have the inequality for the angles $\angle XCY<\angle XCY'$ and, thus,  by the law of cosines, $\rho(X,Y)<\rho(X,Y').$ In the spherical case it holds because the first summand in (\ref{cos}) stays the same in both cases, while in the second one the only change is that the last multiple gets smaller. Note that we do not need to put any restrictions on $\rho(X,C),\rho(C,Y)$, as it is done in the end of point 9 of the previous subsection.
\end{proof}

\begin{proof}[Proof of Lemma \ref{lemrelo}] As we have already said, we apply Lemma \ref{lemred} repeatedly. We may assume that $w\in \partial\Delta_{\pi}.$ The boundary of a Reuleaux simplex can be partitioned into the open spherical regions of different dimensions, as it is stated in Lemma \ref{lem02}. We remark that considerations of Section \ref{sec3} transfer with some obvious changes to the spherical case. Thus, we have a spherical analogue of Lemma \ref{lem02}.

\begin{center}  \includegraphics[width=100mm]{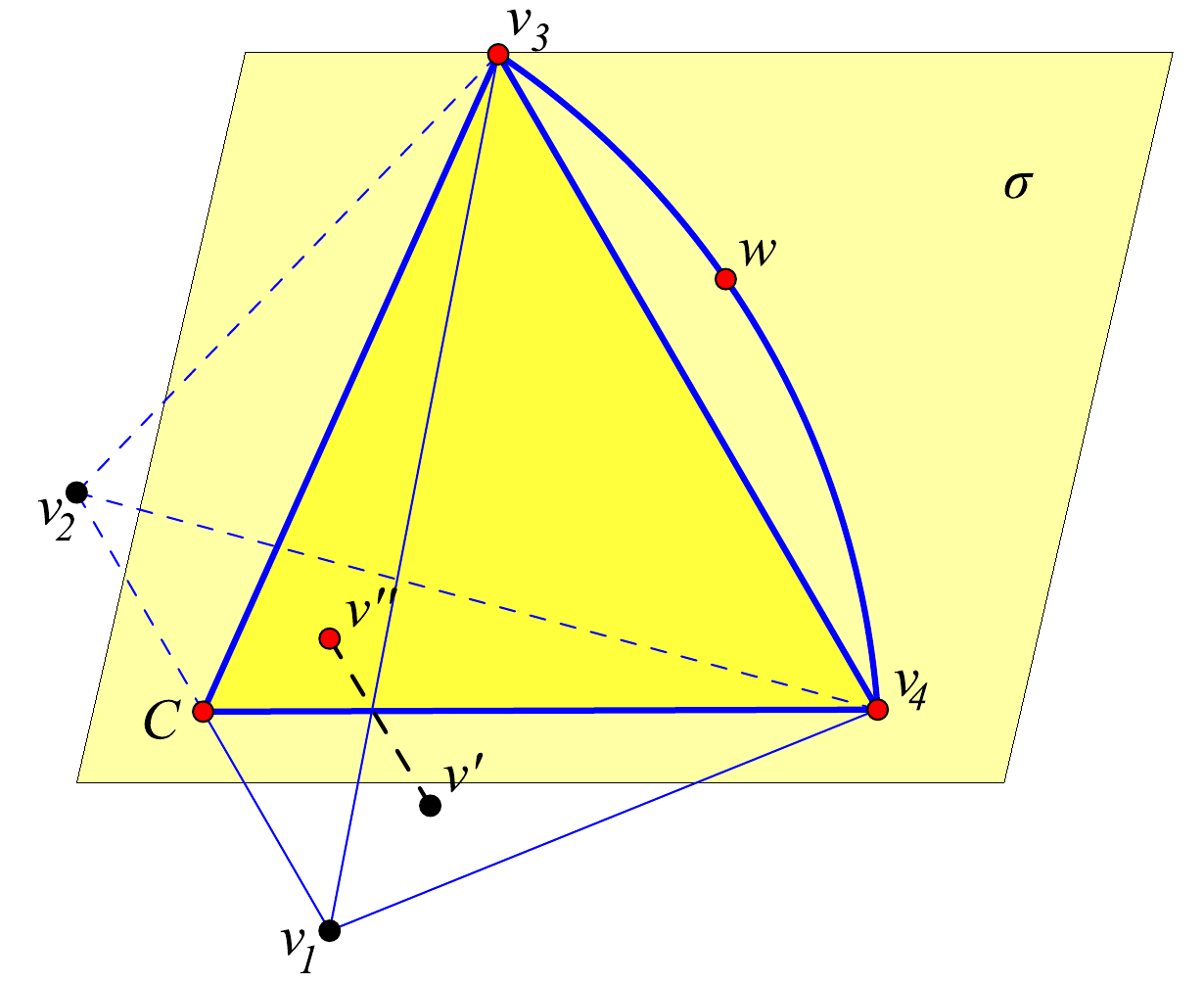}  \end{center}\begin{center}  Figure 6. $v'$ projects inside the convex hull of $C,v_3,v_4$. \end{center}

We first find an open spherical region $\Omega$ that contains $w$ and the sphere $U$ of minimal dimension that contains $\Omega$. We denote its center by $C$. Then we project $T$ on the flat  $\sigma$ of minimum dimension that contains $U$. Assume that $\sigma$ is an affine hull of the points $O, v_{k+1},\ldots, v_d$ (see Lemma \ref{lem02}). Then $C = \nu(v_1+\ldots, +v_k)/k$ with $\nu =1$ in the Euclidean case. The projection is fairly simply arranged (see Fig. 6, in which we show $T$, the face $\Omega$ of $\Delta_{\pi}$ that contains $w$ and the plane $\sigma$; $v_1,v_2$ are projected into $C$, while $v_3,v_4\in\sigma$; all points lying in $\sigma$ are marked by red). Any of the points $v_1,\ldots, v_k$ project into $C$. This is true due to the fact that for any $i=1,\ldots, k$ we have $S_i\cap\sigma = U.$  To the contrary, all the points $v_{k+1},\ldots, v_d$ project to themselves, since they lie in $\sigma.$ It is clear that the projection $v''$ of $v'$ falls into the projection of $T$.

In $\sigma$ consider a hyperplane $\omega$ which passes through $C$ and such that the vertices $w, v''$ and the whole projection of $T$ lie in the halfspace $\omega^+$ (recall that $w\in \Delta_{\pi}$). Any hyperplane in $\sigma$ that touches the set $T\cap\sigma$ in a single point $C$ would fit, and we obviously have such hyperplanes.

It is possible to apply Lemma \ref{lemred} and find a point $w''$ on an open spherical region of a smaller dimension such that $\rho(v'',w'')\ge \rho(v'',w).$ Moreover, if $v'\notin \partial T,$ then $v''$ cannot coincide with $C$, which means that the inequality is strict. By Pythagoras' theorem we get $\rho(v',w'')\ge \rho(v',w)$ (and a strict inequality in the case when $v'\notin \partial T$). Reducing the dimension of the spherical region which contains the current image of $w$ step by step, we eventually arrive at a vertex of $T$, which concludes the proof of both Lemma \ref{lemrelo} and \ref{lemimp}.
\end{proof}
\section{Acknowledgement} We are grateful to Filip Mori\'c and J\'anos Pach for several interesting and stimulating discussions on the problem, Eyal Ackerman for pointing out the paper \cite{MaeS}, Ido Shahaf for his valuable remarks on the presentation of the paper and for providing us with Figure 4, to Arseniy Akopyan and Roman Karasev for valuable discussions and for bringing Kirszbraun's theorem to our attention, and anonymous referees for numerous valuable comments that helped to significantly improve the presentation of the paper.

\end{document}